\documentclass[titlepage,12pt]{article}
\setlength{\topmargin}{-0.5in}
\setlength{\textheight}{9in}
\setlength{\oddsidemargin}{-.125in}
\setlength{\textwidth}{7in}

\usepackage{graphicx}%
\usepackage{amsmath,amssymb,amsfonts}%
\usepackage{amsthm}%
\usepackage{mathrsfs}%
\usepackage{xcolor}%
\usepackage{tikz}%
\usepackage{url}
\usepackage{enumitem}

\newtheorem{theorem}{Theorem}%
\newtheorem{proposition}{Proposition}% 
\newtheorem{problem}{Problem}%
\newtheorem{corollary}{Corollary}%
\newtheorem{assumption}{Assumption}%
\newtheorem{example}{Example}%
\newtheorem{remark}{Remark}%
\newtheorem{definition}{Definition}%

\newcommand{\dl}{\ensuremath{d \kern -.15em l}}
\newcommand{\hatdl}{ d \hat{ \kern -.15em  l}}
%pointwise convergence
\newcommand{\mesh}{\operatorname{mesh}}
\newcommand{\LimInn}{\operatorname{LimInn}}
\newcommand{\LimOut}{\operatorname{LimOut}}

\newcommand{\eto}{\,{\lower 1pt\hbox{$\rightarrow$}}\kern -11pt
	\hbox{\raise 4pt \hbox{$\, \scriptstyle e$}}\hskip7pt}
\newcommand{\hto}{\,{\lower 1pt\hbox{$\rightarrow$}}\kern -11pt
	\hbox{\raise 4pt \hbox{$\, \scriptstyle h$}}\hskip7pt}
\def\Nto{\,{\raise 1pt\hbox{$\rightarrow$}}\kern -13pt     \hbox{\lower 3pt \hbox{$\, \scriptstyle N$}}\hskip7pt}

\newcommand{\uscfcns}{\mathop{\textrm{usc-fcns}}}
\newcommand{\Lipfcns}{\mathop{\textrm{Lip-fcns}}}
\newcommand{\cdfcns}{\mathop{\textrm{cd-fcns}}}
\newcommand{\episplns}{\operatorname{e-spl}}
\newcommand{\reals}{{I\kern-.35em R}}
\newcommand{\Reals}{\overline{I\kern-.35em R}}
\newcommand{\natnums}{{{\rm l} \kern -.13em {\rm N} }}
\newcommand{\nats}{{I\kern -.35em N}}
\newcommand{\snats}{{I\kern -.29em N}}
\newcommand{\rats}{{Q\kern -.64em \raise 1pt \hbox{$\scriptstyle |$}\;\,}}
\newcommand{\srats}{{Q\kern -.56em \raise 1.2pt \hbox{$\scriptscriptstyle /$}\,}}
\newcommand{\ints}{Z\kern -.46em Z}
\newcommand{\ball}{{I\kern -.35em B}}
\newcommand{\Pro}{{I\kern-.35em P}}
\newcommand{\Ex}{{I\kern-.35em E}}
\newcommand{\argmin}{{\rm argmin}}
\newcommand{\red}[1]{{\color{black}#1}}
\newcommand{\rsF}{{\mathscr F}}
\newcommand{\hypog}{{\rm hypo}\,}
\newcommand{\epig}{{\rm epi}\,}
\def\state #1. { \noindent{\bf#1.\enspace}}

\begin{document}

\baselineskip=15pt

\begin{titlepage}
	\vglue 0.5cm
	\begin{center}
		\begin{large}
			{\bf A variational approach to a cumulative distribution function estimation problem under stochastic ambiguity}
			\smallskip
		\end{large}
		\vglue 3.0truecm
		\begin{tabular}{ccc}
			{\sl Julio Deride}  & {\sl Johannes O. Royset } & {\sl Fernanda Urrea}\\
			Faculty of Engineering & Daniel J. Epstein Department of&Laboratoire de \\
			and Science& Industrial and Systems Engineering&Math\'emathiques\\
			Universidad Adolfo Ibáñez & University of Southern California & INSA Rouen Normandie\\
			julio.deride@uai.cl & royset@usc.edu & fernanda.urrea@insa-rouen.fr 
		\end{tabular}
	\end{center}
	\vskip 1.5truecm
	\noindent 
		{\bf Abstract}.\quad We propose a method for finding a cumulative distribution function (cdf) that minimizes the distance to a given cdf, while belonging to an ambiguity set constructed relative to another cdf and, possibly, incorporating soft information.  Our method embeds the family of cdfs onto the space of upper semicontinuous functions endowed with  the hypo-distance.  In this setting, we present an approximation scheme based on epi-splines, defined as piecewise polynomial functions, and use bounds for estimating the hypo-distance.  Under appropriate hypotheses, we guarantee that the cluster points corresponding to the sequence of minimizers of the resulting approximating problems are solutions to a limiting problem.  We describe a  large class of functions that satisfy these hypotheses.  The approximating method produces a linear-programming-based  approximation scheme, enabling us to develop an algorithm from off-the-shelf solvers. The convergence of our proposed approximation is illustrated by numerical examples for the bivariate case.
	\vskip 1.5truecm
	\halign{&\vtop{\parindent=0pt
			\hangindent2.5em\strut#\strut}\cr
		{\bf Keywords}: \ Variational analysis, hypo-distance, epi-convergence, upper semicontinuous functions, \hglue 1.50cm cumulative distribution functions, epi-splines.\hfill\break\cr
		{\bf MSC Classification}: \quad 90C15, 62G05, 65K10, 49M37 \cr\cr
		{\bf Date}:\quad \ \today \cr}
	\end{titlepage}
	\baselineskip=15pt

\section{Introduction}
\label{intro}

We are interested in studying the estimation problem of finding a cumulative distribution function (cdf) that minimizes the distance to a given cdf, while also maintaining proximity to a second cdf, possibly incorporating soft information with respect to the shape of the desired function.   Our primary motivation comes from estimation problems under stochastic ambiguity \cite{royset2017variational} and, particularly, distributional robust optimization (\cite{wiesemann2014distributionally} for a general convex formulation).

Our method is based on posing the problem as a mathematical program on a functional space, representing the cdf functions, and  proposing a metric in order to measure distances between the functions. Thus, this estimation problem corresponds to an infinite-dimensional optimization problem,  and  we develop a convergent approximation scheme, based on solving simpler problems sequentially.

In order to describe our problem, let us consider the Euclidean space $\left(\reals^{m},\|\cdot\|_{\infty}\right)$ and let $S\subset\reals^{m}$ be a closed subset \red{containing 0 (for convenience); in Section~\ref{sec:thms} we will also assume that $S$ is compact.  This choice of norm simplifies computations involving hypo-distance, a concept we will introduce shortly for comparing functions.} It is possible to pose the problem in all its generality in a metric functional space containing the set of cdfs defined over $S$, denoted by $(\rsF, \dl)$.  We are interested in studying the following constrained estimation problem under stochastic ambiguity: given two cdfs 
$F_0$ and $G_0$ (in $\rsF$), we aim to find a function that minimizes the distance to $F_0$, must lie in an ambiguity set described by a ball centered at $G_0$, and may also satisfy other requirements. 

Formally, this leads to the {\em constrained estimation problem under stochastic ambiguity}: 
\[\mbox{find } \hat F \in \argmin_{F \in \rsF} \left\{ \dl(F,F_0) \ \mid \ \dl(F,G_0) \leq \delta \right\}.\]
\red{Here, $\delta > 0$ is a parameter that controls the size of the ambiguity set}.

Note that this problem relates to the family of constrained $M$-estimation problems as the ones described in \cite{royset2020variational}.  The functional space $\rsF$ embeds certain topological properties of cdfs by being a subset of the upper semicontinuous (usc) functions. We let $\dl$ be the hypo-distance.  Note that we can directly incorporate shape (soft) information into this problem formulation.  This includes information such as continuity, stochastic dominance, quantile-type, moment information, pointwise bounds, and, in general, any constraint that defines a closed set with respect to $(\rsF, \dl)$.  For more details, see \cite{royset2017variational,royset2019lopsided}.

Estimation of distribution and density functions can be approached using various techniques.  For example, kernel density estimation  \cite{WandJones94kernel,Hill85kernelcdf} poses the problem of fitting a kernel function to a particular data set, and deducing the corresponding cdf based on estimation of the probability density function. Alternatively,  one can compute the empirical distribution function directly from data as an estimate of a cdf; see for example \cite{ShorackWellner09empirical}.  Another approach comes from smoothing techniques, where a cdf is approximated by smooth functions, such as cubic splines, local regression, and penalized regression \cite{silverman1986density}.  Non-parametric maximum likelihood estimation is another estimation technique  \cite{van1996weak}, by maximizing the likelihood function under appropriate hypotheses.  Related to the cdf estimation problem, one can consider quantile regression  \cite{koenker2001quantile}, where the resulting cdf follows from estimation of its quantiles.  Methods based on Bayesian estimation have also been used in the literature.  The monograph \cite{albert2009bayesian} provides a comprehensive introduction to these techniques, including the application of  Markov chain Monte Carlo (MCMC) methods, such as Gibbs sampling and Metropolis-Hasting.  These approaches aim to find an estimate ``near" $F_0$, typically an empirical cdf, but fail to account for the present of a secondary concern: proximity to $G_0$.

Although alternative metrics like those discussed in \cite{dudley2010distances}, \cite{kuzmenko2019kantorovich}, and the $L^1$-metric in \cite{haskell2022cdf} have been explored for measuring the distance between cdfs, our focus remains on the hypo-distance, as a possibility less explored.  We propose a scheme of approximations for computing the hypo-distance. This scheme is based on bounds introduced in \cite{attouch1991quantitative} (see \cite{rockafellar2009variational} and \cite{royset20stability} for further references).  \red{These bounds rely on the application of a truncated hypo-distance, depending on a truncation radius $\rho$ that provides theoretical advantages and numerical tractability, while approximating the true underlying problem.}  By utilizing these approximations, we propose a finite-dimensional approximation technique that employs epi-splines \cite{royset2016multivariate,royset2016erratum}. In this technique, usc \red{functions} are approximated by polynomials over triangular refinements of the domain.  These approximations pave the way for a solution algorithm that involves solving linear programs and provide the flexibility to incorporate soft information into the solution. 

The article is organized as follows: Section~\ref{sec:notandbg} describes the notation used in this manuscript and provides relevant results related to usc functions, the hypo-distance, cdfs, and epi-splines.   In Section~\ref{sec:thms}, we formulate a mathematical program that serves as a computationally attractive surrogate for the constrained estimation problem under stochastic ambiguity. We also develop convergence results. Section~\ref{sec:Numerical}  contains algorithmic details along with two numerical examples in two dimensions that demonstrate the viability of our approximation scheme.

\section{Preliminaries}\label{sec:notandbg}
In what follows, we consider $\reals$ the set of real numbers, and let $\Reals$ be the set of extended real numbers, including the values $-\infty$ and $\infty$.  Let $S$ be a closed subset of $\reals^m$ endowed with the $\|\cdot\|_{\infty}$-norm. \red{We choose $S$ to contain $0 \in \reals^m$ for simplicity, as it serves as a convenient reference point in our analysis. This choice can be easily generalized to any other element $x\in S$ with only trivial changes to the results.}

For  a function $f: S \rightarrow \Reals$, we define
$$
\liminf _{x^{\prime} \rightarrow x} f\left(x^{\prime}\right):=\lim _{\delta \downarrow 0}\left[\inf _{x^{\prime} \in \ball(x, \delta)} f\left(x^{\prime}\right)\right], \qquad \limsup _{x^{\prime} \rightarrow x} f\left(x^{\prime}\right):=\lim _{\delta \downarrow 0}\left[\sup _{x^{\prime} \in  \ball(x, \delta)} f\left(x^{\prime}\right)\right],
$$
where, for $\delta>0$,  $\ball(x,\delta)=\{x'\in S\mid \|x'-x\|_{\infty}\leq \delta\}$. For $f:S\to\Reals$, we define the hypograph of $f$ as the set
\[\hypog f:= \left\{\left(x, x_{0}\right) \in S \times \reals\,\mid\, f(x) \geq x_{0}\right\}.\]

We say that the function $f$ is upper semicontinuous (usc) at a point $\bar{x}$ if 
\begin{equation*}
	\limsup _{x \rightarrow \bar{x}} f(x) \leq f(\bar{x}).
\end{equation*}

If $f$ is usc at every point in $S$ we say that it is usc.  Equivalently, a function is usc if its hypograph is a closed set (in $S\times \reals$).

We introduce the following sets of functions: 
\begin{eqnarray}
	\label{set_uscfcns}\uscfcns(S)&:=&\{f: S \rightarrow [0,1]\mid f\text{ is usc }\},\\ 
	\label{set_uscfcns+}{\uscfcns}_{+}(S)&:=&\left\{f: S \rightarrow[0,1] \mid f \text { is usc and nondecreasing }\right\},\\
	\label{set_lipfcns}{\Lipfcns}_{\kappa}(S)&:=&\left\{f: S \rightarrow[0,1] \mid f \text { is Lipschitz of modulus }\kappa\right\},
\end{eqnarray}
\sloppy where a function $f$ is Lipschitz of modulus $\kappa$ if there exists a finite $\kappa$ such that $\kappa\geq\sup\left\{{|f(x) - f(x')|}/{\|x-x'\|_\infty} \,\mid\, x,x'\in S, x\neq x'\right\}$.
For a function $f:S\to\Reals$, we also define its epigraph as the set 
\[\epig f:= \left\{\left(x, x_{0}\right) \in S \times \reals\mid f(x) \leq x_{0}\right\}.\]

We say that the function $f$ is lower semicontinuous (lsc) at $\bar{x}$ if $\liminf_{x \rightarrow \bar{x}} f(x)\geq f(\bar{x})$.  If the function is lsc at every point, we say that it is lsc.  Equivalently, $f$ is lsc if and only if its epigraph is a closed set.  

Additionally, as $\hypog f\subset S\times\reals$, we endowed this space with the norm
\begin{equation}
	\label{ec_norm_S}
	\left\|\left(x, x_{0}\right)\right\|_{\mathbb{S}}:=\max \left\{\|x\|_{\infty},\left|x_{0}\right|\right\} \text { for }\left(x, x_{0}\right) \in S \times \reals.
\end{equation}

Thus, a $\|\cdot\|_{\mathbb{S}}$-ball, with radius $r$ and centered at $\bar{x}=\left(x, x_{0}\right) \in S \times \reals$, is denoted by
\begin{equation}
	\label{ec_S}
	\mathbb{S}(\bar{x}, r):=\left\{\bar{y} \in S \times \reals\,|\,\|\bar{x}-\bar{y}\|_{\mathbb{S}} \leq r\right\}.
\end{equation}
For brevity, we write $\mathbb{S}$ in place of $\mathbb{S}(0,1)$.  We define the distance between a point $\bar{x} \in S \times \reals$ and a set $C \subset S \times \reals$ as
\begin{equation}
	\label{ec_dist_S}
	\operatorname{dist}(\bar{x}, C):=\inf \left\{\|\bar{x}-\bar{y}\|_{\mathbb{S}}\mid \bar{y} \in C\right\},
\end{equation}
with the convention that if $C$ is empty, the distance is defined to be infinity.  For a subset $C\subset\reals^m$ we denote its closure by $\operatorname{cl}\,C$.

Let $\nats=\{1,2,\ldots\}$ and $\nats_0=\{0\}\cup \nats$.

For a sequence of sets $\{C^\nu\subset\reals^m,\,\nu\in\nats\}$, we define the limiting sets 
\begin{align*}
	\operatorname{LimInn}C^\nu&:=\{x\in\reals^m\,\mid\,\exists x^\nu\in C^\nu\to x\}\quad \text{and}\\
	\operatorname{LimOut}C^\nu&:=\{x\in\reals^m\,\mid\,\exists N\in \mathcal{N}_\infty^\#\,\text{ and }\,x^\nu\in C^\nu{\Nto}x\},
\end{align*}
where $\mathcal{N}_\infty^\#$ is the set of all infinite collections of increasing numbers from $\nats$, and $x^\nu \Nto x$ represents the fact that $x$ is a cluster point corresponding to the subsequence specified by the index set $N$.  We say that $\{C^\nu\subset\reals^m,\,\nu\in\nats\}$ set-converges to $C\subset\reals^m$ if $\operatorname{LimInn}C^\nu=\operatorname{LimOut}C^\nu=C$ (for details, see \cite{Royset:2020aa}).

\subsection{Metric on the space of usc functions}

For $f,g\in\uscfcns(S)$, we define the hypo-distance between them as 
\begin{equation}
	\label{attouch_wets_dist}
	\dl\left(f, g\right):=\int_{0}^{\infty} \dl_{\rho}\left(f, g\right) e^{-\rho}\,d \rho,
\end{equation}
where for a parameter $\rho>0$, the truncated $\rho$ hypo-distance or simply \emph{$\rho$-distance}\footnote{Although this is just a pseudo-distance, in the sense that it satisfies the properties for being a metric, except that the distance between two different elements might be zero.} is given by
\begin{equation}
	\label{d-rho-def}
	\dl_{\rho}\left(f, g\right):=\max \left\{\left| \operatorname{dist}(\bar{x}, \hypog f)-\operatorname{dist}\left(\bar{x}, \hypog g\right) \right| \ \mid\ \|\bar{x}\|_{\mathbb{S}} \leq \rho\right\}.
\end{equation}
Note that the definition of hypo-distance can be seen as the set distance, in the Attouch-Wets sense, between the corresponding hypographs, see \cite{Royset:2020aa} for more details.

As a supplement of the $\rho$-distance, we consider the approximated $\rho$-distance or \emph{hat-distance} following the approach in \cite[Ch.\,4]{rockafellar2009variational}, defined  as
\[	\hatdl_{\rho}(f, g):=\inf\left\{\eta \geq 0\,\left|\begin{array}{l}
	\hypog f\cap \rho\mathbb{S}\subset \hypog g+\eta \mathbb{S}\\
	\hypog g\cap \rho \mathbb{S}\subset \hypog f+\eta \mathbb{S}
\end{array}\right.\right\}.\]
This hat-distance  can be stated in a more direct fashion in the context of monotone nondecreasing functions, relevant to our estimation problem.  If $f,g\in\uscfcns_{+}(S)$ we obtain the more explicit expression
\begin{align}
	\label{d-rho-hat-def}
	\hatdl_{\rho}(f, g):=\inf \left\{  \eta\geq 0 \left| \begin{array}{l} 
		\displaystyle \max_{y\in Y_\eta(x)} g(x+y)+\eta \geq \min \{f(x), \rho\} \\ 
		\displaystyle \max_{y\in Y_\eta(x)} f(x+y)+\eta \geq \min \{g(x), \rho\}
	\end{array},\right.\ \forall x \in \rho\ball\right\},
\end{align}
\red{for $Y_\eta(x):=\{y \in \reals^m\,\,\vert\,\, x+y\in S,\, \eta \mathbf{1} \geq y \geq 0\}$, where $\mathbf{1}$ denotes the all-ones vector in $\reals^m$} and $\ball:=\left\{y\in S\,\vert\,\|y\|_\infty\leq 1\right\}$.  The expression in \eqref{d-rho-hat-def} follows as a simplification of the Kenmochi condition: see for example \cite[Prop.\,6.58]{royset2021optimization}).  Thus, the computation of the hat-distance in \eqref{d-rho-hat-def} provides a computationally attractive surrogate for the hypo-distance. 

Finally, we say that a sequence $\{f^\nu\in\uscfcns(S)\}_{\nu\in\nats}$ {\it hypo-converges} to $f$, denoted by $f^\nu\hto f$, if $\dl(f^\nu,f)\to 0$ as $\nu\to\infty$.  

The following section discusses several bounds for the $\rho$-distance and hat-distance, which will serve as stepping stones towards our approximation scheme.

\subsection{Relevant bounds for the hypo-distance}
Our approximation scheme relies on substituting the hypo-distance by the hat-distance, combined with a relaxation of the hypo-distance using distance bounds.  Let us state the following propositions.

\begin{proposition}\label{prop:less1}
	If $f,g \in \uscfcns(S)$, then $\dl_\rho(f,g)\leq 1$ for every $\rho\geq 0$. Moreover, $\dl(f,g)\leq 1$.
\end{proposition}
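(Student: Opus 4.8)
The plan is to exploit the fact that every $f \in \uscfcns(S)$ takes values in $[0,1]$, which forces its hypograph to be sandwiched between two horizontal slabs. Concretely, since $0 \le f(x) \le 1$ for all $x \in S$, every point $(x, x_0)$ with $x_0 \le 0$ satisfies $f(x) \ge 0 \ge x_0$ and hence lies in $\hypog f$, while every point of $\hypog f$ has height $x_0 \le f(x) \le 1$. Writing $L := S \times (-\infty, 0]$ and $U := S \times (-\infty, 1]$, this gives the inclusions $L \subseteq \hypog f \subseteq U$, and the identical inclusions hold for $g$. Because distance to a set reverses under inclusion, for every $\bar x \in S \times \reals$ both $\operatorname{dist}(\bar x, \hypog f)$ and $\operatorname{dist}(\bar x, \hypog g)$ lie in the common interval $[\operatorname{dist}(\bar x, U), \operatorname{dist}(\bar x, L)]$.

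Next I would compute the two enveloping distances explicitly. Fix $\bar x = (x, x_0)$ with $x \in S$, as required by the constraint $\|\bar x\|_{\mathbb{S}} \le \rho$. Since $x \in S$, the nearest point of either slab can be taken with its first coordinate equal to $x$, so only the vertical coordinate matters: one finds $\operatorname{dist}(\bar x, L) = \max\{x_0, 0\}$ and $\operatorname{dist}(\bar x, U) = \max\{x_0 - 1, 0\}$. A short case check on the sign of $x_0$ then shows $\operatorname{dist}(\bar x, L) - \operatorname{dist}(\bar x, U) \in [0,1]$ for every such $\bar x$. Since the two quantities $\operatorname{dist}(\bar x, \hypog f)$ and $\operatorname{dist}(\bar x, \hypog g)$ both sit inside an interval of length at most $1$, their difference is bounded by $1$ in absolute value; taking the maximum over $\|\bar x\|_{\mathbb{S}} \le \rho$ yields $\dl_\rho(f, g) \le 1$ for every $\rho \ge 0$.

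The second assertion is then immediate: substituting this uniform bound into the definition of the hypo-distance gives $\dl(f,g) = \int_0^\infty \dl_\rho(f,g)\, e^{-\rho}\, d\rho \le \int_0^\infty e^{-\rho}\, d\rho = 1$. I do not anticipate a genuine obstacle here; the only point requiring a little care is verifying that the nearest slab point keeps the $x$-coordinate fixed, which relies precisely on $x \in S$ holding for admissible $\bar x$ together with the slabs being cylinders over all of $S$. The whole argument hinges on the single structural remark that the range $[0,1]$ pins the hypographs between two slabs separated by unit vertical distance.
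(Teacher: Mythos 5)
Your proof is correct, and it matches the paper's treatment: the paper gives no written argument, stating only that the proposition ``follows directly from the definitions,'' and your slab-sandwich computation $S\times(-\infty,0]\subseteq\hypog f\subseteq S\times(-\infty,1]$ is exactly such a direct verification, with the key point (that the range $[0,1]$ traps both hypograph distance functions in an interval of length at most $1$) carried out cleanly.
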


The proof of Proposition~\ref{prop:less1} follows directly from the definitions.  The next results, Proposition~\ref{prop:est-rho-hypo-dist}~and~\ref{prop:estimates-hypodistance}, restate and refine arguments in \cite[Prop.\,3.1]{royset2018approximations} by considering the specific setting of $\uscfcns(S)$.

\begin{proposition}[Estimates of $\rho$-distance and hat-distance]
	\label{prop:est-rho-hypo-dist}
	If $f,g \in {\uscfcns}(S)$, then
	$$
	\hatdl_{\rho}\left(f,g\right) \leq \dl_{\rho}\left(f, g\right) \leq \hatdl_{2\rho}\left(f, g\right) \quad \text { for }\ \rho \geq 0.
	$$
\end{proposition}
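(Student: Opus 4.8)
The plan is to unwind both quantities in terms of the hypographs $A := \hypog f$ and $B := \hypog g$, which are nonempty closed subsets of $S \times \reals$, and then prove the two inequalities separately. The guiding observation is that $\operatorname{dist}(\bar x, A) \le \eta$ is the same as $\bar x \in A + \eta\mathbb{S}$ (the projection being attained since $A$ is closed and the relevant sublevel sets of the distance are compact in finite dimensions), so the two inclusions defining $\hatdl_\rho$ read as the uniform bounds $\operatorname{dist}(\bar x, B) \le \eta$ for all $\bar x \in A \cap \rho\mathbb{S}$, and $\operatorname{dist}(\bar x, A) \le \eta$ for all $\bar x \in B \cap \rho\mathbb{S}$.

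For the left inequality I would check that $\eta = \dl_\rho(f,g)$ is feasible for the infimum defining $\hatdl_\rho(f,g)$. Any $\bar x \in A \cap \rho\mathbb{S}$ has $\operatorname{dist}(\bar x, A) = 0$ and $\|\bar x\|_{\mathbb{S}} \le \rho$, so $\operatorname{dist}(\bar x, B) = |\operatorname{dist}(\bar x, A) - \operatorname{dist}(\bar x, B)| \le \dl_\rho(f,g)$ straight from the definition of the $\rho$-distance; the symmetric statement is identical. Hence both inclusions hold at $\eta = \dl_\rho(f,g)$, and the infimum can be no larger. This direction is purely definitional.

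The right inequality is the substantive one. Writing $\eta := \hatdl_{2\rho}(f,g)$ and fixing $\bar x = (x,x_0)$ with $\|\bar x\|_{\mathbb{S}} \le \rho$, I want $|\operatorname{dist}(\bar x, A) - \operatorname{dist}(\bar x, B)| \le \eta$, and by symmetry may bound only $\operatorname{dist}(\bar x, B) - \operatorname{dist}(\bar x, A)$. The key step is a localization of the projection: since $f$ maps into $[0,1]$ and $x \in S$, the point $(x,0)$ belongs to $A$, so
\[\operatorname{dist}(\bar x, A) \le \|\bar x - (x,0)\|_{\mathbb{S}} = |x_0| \le \rho .\]
Consequently a nearest point $a \in A$ to $\bar x$ satisfies $\|a\|_{\mathbb{S}} \le \|a - \bar x\|_{\mathbb{S}} + \|\bar x\|_{\mathbb{S}} = \operatorname{dist}(\bar x, A) + \|\bar x\|_{\mathbb{S}} \le 2\rho$, i.e. $a \in A \cap 2\rho\mathbb{S}$, precisely the radius at which the $\hatdl_{2\rho}$-inclusion applies. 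For each $\eta' > \eta$ that inclusion then furnishes $b \in B$ with $\|a - b\|_{\mathbb{S}} \le \eta'$, and the triangle inequality yields $\operatorname{dist}(\bar x, B) \le \operatorname{dist}(\bar x, A) + \eta'$; letting $\eta' \downarrow \eta$ and taking the supremum over $\bar x \in \rho\mathbb{S}$ finishes the bound.

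The main obstacle is exactly this localization, and it is what forces the factor $2\rho$ on the right-hand side: the inclusion in $\hatdl_{2\rho}$ only controls points of $A$ inside the $2\rho$-ball, so one must first certify $\|a\|_{\mathbb{S}} \le 2\rho$, which hinges on the a priori estimate $\operatorname{dist}(\bar x, A) \le \rho$. This is where the standing assumption $f,g \in \uscfcns(S)$ enters, through $(x,0) \in \hypog f \cap \hypog g$, and it is the refinement of \cite[Prop.\,3.1]{royset2018approximations} to the present $[0,1]$-valued setting. The remaining points are routine: attainment of the projections in finite dimensions, and the passage $\eta' \downarrow \eta$ already used above, which covers a possibly non-attained infimum in the definition of $\hatdl_{2\rho}$.
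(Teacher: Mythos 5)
Your proof is correct and takes essentially the same route as the paper's: the left inequality by checking that $\eta=\dl_\rho(f,g)$ is feasible for the infimum defining $\hatdl_\rho(f,g)$ (using closedness of the hypographs for attainment), and the right inequality by localizing the nearest-point projection inside $2\rho\mathbb{S}$ and applying the triangle inequality. The only cosmetic difference is the source of the a priori bound $\operatorname{dist}(\bar x,\hypog f)\le\rho$: the paper gets it from $(0,0)\in\hypog f$ (using $0\in S$ and $f\ge 0$), while you use $(x,0)\in\hypog f$; both exploit nonnegativity of $f$ and produce the same factor of $2$.
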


\begin{proof}
	Let $C,D\subset S\times\reals$ be two closed sets such that $0 \in C\cap D$. Let $\varepsilon>0$,  $\rho>0$ and $\bar\rho \geq 2 \rho$. We first show that
	\begin{equation}
		\label{1st_implication}
		\operatorname{dist}(\cdot, D) \leq \operatorname{dist}(\cdot, C)+\varepsilon\text{ on }\rho \mathbb{S}\text{  implies that }C \cap \rho \mathbb{S} \subset D+\varepsilon \mathbb{S}.
	\end{equation}
	Note that for every $\bar{x} \in C \cap \rho \mathbb{S}$ such that $\operatorname{dist}(\bar{x}, D) \leq \varepsilon$, as $D$ is closed, we have that $C \cap \rho \mathbb{S} \subset D+\varepsilon \mathbb{S}$.
	Second, we establish that 
	\begin{equation}
		\label{2nd_implication}
		C \cap \bar\rho \mathbb{S} \subset D+\varepsilon \mathbb{S}\text{ implies that } \operatorname{dist}(\cdot, D) \leq \operatorname{dist}(\cdot, C)+\varepsilon\text{ on }\rho \mathbb{S}.
	\end{equation}
	For any $\bar{x} \in S\times \reals$,
	$$
	\begin{aligned}
		\operatorname{dist}\left(\bar{x}, C \cap \bar\rho \mathbb{S}\right) & \geq \operatorname{dist}(\bar{x}, D+\varepsilon \mathbb{S})\\
		&=\inf \left\{\|(\bar{y}+\varepsilon \bar{z})-\bar{x}\|_{\mathbb{S}}: \bar{y} \in D, \bar{z} \in \mathbb{S}\right\} \\
		& \geq \inf \left\{\|\bar{y}-\bar{x}\|_{\mathbb{S}}-\varepsilon\|\bar{z}\|_{\mathbb{S}}: \bar{y} \in D, \bar{z} \in \mathbb{S}\right\}\\
		&=\operatorname{dist}(\bar{x}, D)-\varepsilon.
	\end{aligned}
	$$
	Thus, $\operatorname{dist}(\cdot, D) \leq \operatorname{dist}\left(\cdot, C \cap \bar\rho \mathbb{S}\right)+\varepsilon$ on $S\times \reals$. It remains to establish that 
	\begin{equation}
		\label{ec3_prop2}
		\operatorname{dist}(\bar{x}, C \cap \left.\bar\rho\mathbb{S}\right)=\operatorname{dist}(\bar{x}, C)\text{ when }\bar{x} \in \rho \mathbb{S} \text{ and } \bar\rho \geq 2 \rho.
	\end{equation}
	Naturally we have $\operatorname{dist}(\bar{x}, C \cap \left.\bar\rho\mathbb{S}\right) \geq \operatorname{dist}(\bar{x}, C)$, for the remainder let $\bar{x} \in \rho \mathbb{S}$ and $\bar{y} \in$ $\argmin_{c \in C}\left\|\bar{x}-c\right\|_{\mathbb{S}}$, which exists since $C$ is closed. Implication \eqref{ec3_prop2} is established if $\bar{y} \in \bar\rho\mathbb{S}$. This is indeed the case because 
	$$\|\bar{y}\|_{\mathbb{S}} \leq\|\bar{x}\|_{\mathbb{S}}+\|\bar{y}-\bar{x}\|_{\mathbb{S}} $$
	with $\|\bar{y}-\bar{x}\|_{\mathbb{S}}=\operatorname{dist}(\bar{x}, C) \leq \operatorname{dist}(\bar{x}, 0) = \|\bar x\|_{\mathbb{S}}$ 
	since $0 \in C $ and consequently 
	$$\|\bar{y}\|_{\mathbb{S}} \leq 2\|\bar{x}\|_{\mathbb{S}} \leq 2 \rho\leq \bar\rho.$$ 
	
	Let $f,g\in\uscfcns(S)$.  In order to compute $\dl_\rho(f,g)$, we apply the first implication \eqref{1st_implication} with $C=\hypog f$ and $D=\hypog  g$, and then with $C=\hypog g$ and $D=\hypog f$.  Note that $(0,0)\in \hypog f$ as $0\in S$ and $f(x)\geq 0, \forall x\in S$. Analogously, $(0,0)\in \hypog g$.  For $\varepsilon = \eta > 0$, we obtain
	$$  |\operatorname{dist}(\cdot,\hypog f)- \operatorname{dist}(\cdot,\hypog g)| \leq \eta \text{ on } \rho\mathbb{S},$$
	which implies that
	$$ \hypog f \cap \rho \mathbb{S} \subset \hypog g+\eta \mathbb{S} \ \text{ and } \hypog g \cap \rho \mathbb{S} \subset \hypog f+\eta \mathbb{S},$$
	in turn $\hatdl_{\rho}\left(f, g\right) \leq \dl_{\rho}\left(f, g\right)$. By following an analogous procedure with the second implication in \eqref{2nd_implication}, we obtain that
	$\hatdl_{\rho}\left(f, g\right) \geq \dl_{\bar\rho}\left(f, g\right)$ for $\bar\rho\geq 2\rho$.
\end{proof}

\begin{proposition}[Estimates of hypo-distance and hat-distance]
	\label{prop:estimates-hypodistance}
	If $f, g \in$ $\uscfcns_{+}(S)$, then for any \red{$\rho \in[0, \infty)$},
	$$
	\hatdl_{\rho}(f, g) e^{-\rho} \leq \dl(f, g) \leq e^{-\rho}+\left(1-e^{-\rho}\right) \hatdl_{2 \rho}(f, g).
	$$
\end{proposition}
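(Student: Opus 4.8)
The plan is to derive both inequalities by inserting, into the integral defining the hypo-distance, the fixed-$\rho$ sandwich of Proposition~\ref{prop:est-rho-hypo-dist}, namely $\hatdl_\rho(f,g)\le \dl_\rho(f,g)\le \hatdl_{2\rho}(f,g)$, together with the crude bound $\dl_s(f,g)\le 1$ from Proposition~\ref{prop:less1}; since $\uscfcns_+(S)\subset\uscfcns(S)$, both propositions apply under the stated hypothesis. Writing the defining integral as $\dl(f,g)=\int_0^\infty \dl_s(f,g)\,e^{-s}\,ds$, where I rename the dummy variable to $s$ to avoid clashing with the fixed parameter $\rho$, the strategy is to split this integral at $s=\rho$ and estimate each piece. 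The one structural fact not already recorded that I will need is that $s\mapsto\hatdl_s(f,g)$ is nondecreasing: if $s_1\le s_2$ then $\hypog f\cap s_1\mathbb{S}\subset\hypog f\cap s_2\mathbb{S}$ and likewise for $g$, so any $\eta$ realizing the two inclusions at level $s_2$ also realizes them at level $s_1$, whence the infimal $\eta$ cannot decrease as $s$ grows.

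For the upper bound I would split $\dl(f,g)=\int_0^\rho \dl_s(f,g)\,e^{-s}\,ds+\int_\rho^\infty \dl_s(f,g)\,e^{-s}\,ds$. On $[0,\rho]$ I use $\dl_s(f,g)\le\hatdl_{2s}(f,g)\le\hatdl_{2\rho}(f,g)$, the first inequality from Proposition~\ref{prop:est-rho-hypo-dist} and the second from the monotonicity above; since $\int_0^\rho e^{-s}\,ds=1-e^{-\rho}$, this piece is at most $(1-e^{-\rho})\,\hatdl_{2\rho}(f,g)$. On $[\rho,\infty)$ I use $\dl_s(f,g)\le 1$ from Proposition~\ref{prop:less1} together with $\int_\rho^\infty e^{-s}\,ds=e^{-\rho}$. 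Summing the two pieces yields exactly $\dl(f,g)\le e^{-\rho}+(1-e^{-\rho})\,\hatdl_{2\rho}(f,g)$.

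For the lower bound I would discard the contribution on $[0,\rho]$, which is legitimate because the integrand is nonnegative, and bound the tail from below using $\dl_s(f,g)\ge\hatdl_s(f,g)\ge\hatdl_\rho(f,g)$ for $s\ge\rho$, again from Proposition~\ref{prop:est-rho-hypo-dist} and monotonicity. This gives $\dl(f,g)\ge\int_\rho^\infty \hatdl_\rho(f,g)\,e^{-s}\,ds=\hatdl_\rho(f,g)\,e^{-\rho}$, as claimed.

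Since the argument is essentially bookkeeping around a single integral, I expect no serious obstacle; the two points that need care are the monotonicity statement, which is not isolated as a separate lemma in the text and so must be justified in passing, and the endpoint $\rho=\infty$ permitted by the range $\rho\in[0,\infty]$. At $\rho=\infty$ the bounds must be read as limits: there $e^{-\rho}=0$ and $\hatdl_{2\rho}=\hatdl_\infty$, so the upper estimate collapses to $\dl(f,g)\le\hatdl_\infty(f,g)$, which follows by letting $\rho\to\infty$ (equivalently from $\dl_s\le\hatdl_{2s}\le\hatdl_\infty$ for every $s$), while the lower estimate degenerates to the trivial $0\le\dl(f,g)$; the case $\rho=0$ reduces to $\dl(f,g)\le 1$ of Proposition~\ref{prop:less1} paired with $0\le\dl(f,g)$.
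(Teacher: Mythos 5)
Your proof is correct and follows essentially the same route as the paper's: both split the defining integral at $\rho$, bound the tail $\int_\rho^\infty$ by $e^{-\rho}$ using $\dl_s(f,g)\leq 1$ from Proposition~\ref{prop:less1}, and convert the remaining piece into hat-distance terms via the sandwich of Proposition~\ref{prop:est-rho-hypo-dist}. The only cosmetic difference is that the paper invokes monotonicity of $\tau\mapsto\dl_\tau(f,g)$ to pull the endpoint value $\dl_\rho(f,g)$ out of each piece before applying the sandwich once, whereas you apply the sandwich under the integral and then use monotonicity of $s\mapsto\hatdl_s(f,g)$, which you rightly justify in passing since that fact is not isolated in the paper.
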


\begin{proof}
	From the definition of the hypo-distance,
	$$
	\dl\left(f, g\right)=\int_{0}^{\rho} \dl_{\tau}\left(f, g\right) e^{-\tau} d\tau +\int_{\rho}^{\infty} \dl_{\tau}\left(f, g\right) e^{-\tau} d \tau.
	$$
	Since $\dl_{\tau}\left(f, g\right)$ is nondecreasing as $\tau$ increases and $0<\tau<\rho$, we have that
	\begin{equation}
		\label{eq1esthypo}
		\dl_{0}\left(f, g\right) \int_{0}^{\rho} e^{-\tau} d\tau \leq \int_{0}^{\rho} \dl_{\tau}\left(f, g\right) e^{-\tau} d \tau \leq \dl_{\rho}\left(f,g\right) \int_{0}^{\rho} e^{-\tau} d \tau  
	\end{equation}
	and
	\begin{equation}
		\label{eq2esthypo}
		\int_{\rho}^{\infty} \dl_{\rho}\left(f, g\right)e^{-\tau} d \tau \leq \int_{\rho}^{\infty} \dl_{\tau}\left(f, g\right) e^{-\tau} d \tau.
	\end{equation}
	Because of $f,g \in \operatorname{usc-fcns_+(S)}$, $\dl_\tau(f,g) \leq 1$, then
	\begin{equation}
		\label{eq4esthypo}
		\int_{\rho}^{\infty} \dl_{\tau}\left(f, g\right) e^{-\tau} d \tau\leq \int_{\rho}^{\infty} e^{-\tau} d \tau =  e^{-\rho}.
	\end{equation}
	Carrying out the integrations in \eqref{eq1esthypo} and proceeding equally on \eqref{eq2esthypo}, we obtain that
	\begin{equation}
		\left(1-e^{-\rho}\right)\left|d^f-d^g\right|+e^{-\rho} \dl_{\rho}\left(f, g\right) \leq\dl\left(f, g\right)\leq\left(1-e^{-\rho}\right) \dl_{\rho}\left(f, g\right)+e^{-\rho},
	\end{equation}
	where $d^f = \operatorname{dist}(0,\hypog f)$ (analogous for $g$). \red{Since $0\in S$, the domains of $f$ and $g$ share at least one element.  Then, from the definition of the hypograph, both $f(0)\geq 0$ and $g(0)\geq 0$ hold true, implying $d^f =d^g=0$.} Consequently Proposition \ref{prop:est-rho-hypo-dist} gives the result.
\end{proof}

In the next definitions, we describe the partition of the domain that we use in our approximation scheme, along with the refinement procedure, and the appropriate approximations to the hat-distance in this setting.

In what follows, we restrict the set $S$ to be a rectangular region.  Henceforth rectangular $S$ means that $S=[\alpha_1,\beta_1]\times\cdots\times[\alpha_m,\beta_m]$, where $\alpha_i$ and $\beta_i$ possibly being $-\infty$ and $+\infty$ respectively.  

\begin{definition}[Box partition]
	For $\rho\geq 0$, a box partition $\mathcal{R}$ of a rectangular $S$ is a collection $\mathcal{R}=\left\{R_{1},\dots, R_{N}\right\}$ of subsets of the form $R_{k}=\left(l_{1}^{k}, u_{1}^{k}\right) \times \ldots \times\left(l_{m}^{k}, u_{m}^{k}\right)$ with $R_{k} \cap R_{k^{\prime}}=\emptyset$ for $k \neq k^{\prime}$ and $\cup_{k=1}^{N} \operatorname{cl} R_{k}=S$.  We define the mesh-size associated with $\mathcal{R}$ as $\mesh(\mathcal{R}):=\max \left\{u_{j}^{k}-l_{j}^{k} \mid k=1, \ldots, N, j=1, \ldots, m\right\}$
\end{definition}

Note that we can define triangular partitions of $S$ likewise, but we omit the details here.  Triangular partitions are useful in the numerical implementation.

\begin{definition}[Infinite refinement] 
	For $\rho>0$, a sequence $\left\{\mathcal{R}^{\nu}\right\}_{\nu=1}^{\infty}$ of partitions of a rectangular $S$, with $\mathcal{R}^{\nu}=\left\{R_{k}^{\nu}\right\}_{k=1}^{N^{\nu}}$, is an infinite refinement if	for every $x \in S$ and $\varepsilon>0$, there exists $\bar{\nu} \in \nats$  and $\delta\in(0,\varepsilon)$ such that $R_{k}^{\nu} \subset \ball(x, \varepsilon)$ for every $\nu \geq \bar{\nu}$ and $k$ satisfying $R_{k}^{\nu}\cap \ball(x,\delta)\neq\emptyset$.
\end{definition}

\begin{definition}[Functions $\eta_{\rho}^{+}$, $\eta_{\rho}^{-}$] \label{def:eta+-}
	For a partition $\mathcal{R}$ of a rectangular $S$, and functions $f,g\in\uscfcns(S)$, we define the following quantities
	\begin{eqnarray*}
		\eta_{\rho}^{+}(f,g)&:=&\inf \left\{\eta\geq 0 \ \left|\,\, \begin{array}{ll}
			\displaystyle \max_{y\in Y_\eta(l^{k})}g(l^{k}+y)+\eta &\geq \min \left\{f(u^{k}), \rho\right\}\\
			\displaystyle \max_{y\in Y_\eta(l^{k})}f(l^{k}+y)+\eta &\geq \min \left\{g(u^{k}), \rho\right\}
		\end{array}\right.,\forall  k=1, \ldots, N\right\}\\
		\eta_{\rho}^{-}(f,g)&:=&\inf \left\{\eta\geq 0 \ \left|\,\, \begin{array}{ll}
			\displaystyle \max_{y\in Y_\eta(l^{k})} g(l^{k}+y)+\eta &\geq \min \left\{f(l^{k}), \rho\right\}\\
			\displaystyle \max_{y\in Y_\eta(l^{k})} f(l^{k}+y)+\eta &\geq \min \left\{g(l^{k}), \rho\right\}
		\end{array},\right. \forall  k=1, \ldots, N\right\}.
	\end{eqnarray*}
\end{definition}

We will show that the quantities $\eta^{+}_{\rho}$ and $\eta^{-}_{\rho}$ offer computational advantages over the direct computation of the hat-distance, which generally is difficult. In our overall scheme, they will be the fundamental pieces for the solution of the estimation problem. In the next result, we establish how these quantities relate to the hat-distance.

\begin{theorem}[Approximation of hat-distance] 
	\label{thm:app-hatdistance} Let $f, g \in {\uscfcns}_{+}(S)$, $\rho \geq 0$, and consider a box partition $\mathcal{R}$ of a rectangular $S$. Then  
	$$
	\eta_{\rho}^{-}(f,g) \leq \hatdl_{\rho}(f,g) \leq \eta_{\rho}^{+}(f,g).
	$$
	Moreover, if $f, g$ are also Lipschitz continuous with modulus $\kappa$, i.e., $f,g\in{\Lipfcns} _{\kappa}(S)$, then $\eta_{\rho}^{+}(f,g)-\eta_{\rho}^{-}(f,g) \leq \kappa \mesh (\mathcal{R})$.
\end{theorem}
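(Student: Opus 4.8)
The plan is to compare the \emph{feasible sets} of the three infimum problems that define $\eta_\rho^-(f,g)$, $\hatdl_\rho(f,g)$, and $\eta_\rho^+(f,g)$, and then to establish the Lipschitz gap by a perturbation argument. It is convenient to abbreviate, for $h\in\{f,g\}$,
\[
M_\eta^h(a):=\max_{y\in Y_\eta(a)} h(a+y)=\max\bigl\{h(z)\ \big|\ z\in S,\ a\le z\le a+\eta\mathbf 1\bigr\}.
\]
Since $S=\prod_{j}[\alpha_j,\beta_j]$ is a product of intervals and $h$ is nondecreasing, this maximum is attained at the clipped top corner $z^\star(a)=\bigl(\min\{a_j+\eta,\beta_j\}\bigr)_{j}\in S$; hence $a\mapsto M_\eta^h(a)$ is nondecreasing, and $\eta\mapsto M_\eta^h(a)$ is nondecreasing because $Y_\eta(a)$ grows with $\eta$. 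Together with the monotonicity of $f$ and $g$ appearing on the right-hand sides, these two facts drive every step.

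For the lower bound $\eta_\rho^-\le\hatdl_\rho$, I would observe that the two constraints defining $\eta_\rho^-$ are exactly the two inequalities of \eqref{d-rho-hat-def} evaluated only at the finitely many lower corners $x=l^k$. Consequently any $\eta$ feasible for $\hatdl_\rho$ — which requires those inequalities for \emph{all} $x\in\rho\ball$ — is in particular feasible for $\eta_\rho^-$, so the feasible set of $\hatdl_\rho$ is contained in that of $\eta_\rho^-$ and the infima satisfy $\eta_\rho^-\le\hatdl_\rho$. The one point needing care is that each corner $l^k$ must itself lie in $\rho\ball$ for the $\hatdl_\rho$ inequality to be available there; I would make this explicit (it holds, e.g., whenever the partitioned region lies in $\rho\ball$), since a lower corner sitting outside $\rho\ball$ genuinely need not be controlled by the $\hatdl_\rho$ constraints.

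For the upper bound $\hatdl_\rho\le\eta_\rho^+$, I would prove the reverse inclusion: any $\eta$ satisfying the $\eta_\rho^+$ constraints at every box satisfies the $\hatdl_\rho$ constraints at every $x\in\rho\ball$. Fixing such an $x$ and choosing $k$ with $x\in\operatorname{cl}R_k$, so that $l^k\le x\le u^k$, the monotonicity lemma and $l^k\le x$ give $M_\eta^g(x)\ge M_\eta^g(l^k)$, while $f$ nondecreasing and $x\le u^k$ give $\min\{f(x),\rho\}\le\min\{f(u^k),\rho\}$. Chaining these with the box-$k$ inequality $M_\eta^g(l^k)+\eta\ge\min\{f(u^k),\rho\}$ yields $M_\eta^g(x)+\eta\ge\min\{f(x),\rho\}$, and symmetrically after swapping $f$ and $g$. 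Hence the feasible set of $\eta_\rho^+$ is contained in that of $\hatdl_\rho$, and $\hatdl_\rho\le\eta_\rho^+$.

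For the Lipschitz gap, assuming in addition $f,g\in\Lipfcns_\kappa(S)$, I would show that $\eta$ feasible for $\eta_\rho^-$ forces $\eta+\kappa\,\mesh(\mathcal R)$ feasible for $\eta_\rho^+$: box by box, $\|u^k-l^k\|_\infty\le\mesh(\mathcal R)$ and the Lipschitz bound give $\min\{f(u^k),\rho\}\le\min\{f(l^k),\rho\}+\kappa\,\mesh(\mathcal R)$, whereas replacing $\eta$ by $\eta+\kappa\,\mesh(\mathcal R)$ raises the additive term by $\kappa\,\mesh(\mathcal R)$ and, since $\eta\mapsto M_\eta^g$ is nondecreasing, does not decrease $M_\eta^g(l^k)$; so the $\eta_\rho^-$ inequality at $l^k$ upgrades to the $\eta_\rho^+$ inequality. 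Taking infima gives $\eta_\rho^+\le\eta_\rho^-+\kappa\,\mesh(\mathcal R)$, and $\eta_\rho^-\le\eta_\rho^+$ is immediate from the first part (or because the $\eta_\rho^+$ constraints are tighter). I expect the genuinely non-formal step to be the monotonicity lemma behind the upper bound — verifying that moving the base point from $l^k$ up to $x$ can only increase $M_\eta^g$, which hinges on $g$ nondecreasing and on $S$ being a box so that the constrained maximizer is the explicit clipped corner $z^\star$ — together with the in-$\rho\ball$ caveat flagged for the lower bound.
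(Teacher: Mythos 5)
Your proof is correct and follows essentially the same route as the paper's: the lower bound by viewing the $\eta_{\rho}^{-}$ constraints as a finite subset of the constraints defining $\hatdl_{\rho}$, the upper bound by a monotonicity chain through the box containing $x$ (using that $f,g$ are nondecreasing so the constrained maximum sits at the clipped top corner), and the Lipschitz gap by showing that $\eta$ feasible for $\eta_{\rho}^{-}$ makes $\eta+\kappa\mesh(\mathcal{R})$ feasible for $\eta_{\rho}^{+}$. Your caveat about the lower corners $l^{k}$ needing to lie in $\rho\ball$ is well taken: the paper's proof silently assumes this when it calls the finite system a ``relaxation'' (if some $l^{k}$ falls outside $\rho\ball$, the corresponding $\eta_{\rho}^{-}$ constraint is genuinely not among the $\hatdl_{\rho}$ constraints and the first inequality can fail), so making that hypothesis explicit is a refinement of, not a departure from, the paper's argument.
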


\begin{proof}
	The lower bound comes from the definition of $\eta_\rho^{-}(f,g)$ as a minimization problem, and the  fact that considering a finite number of constraints is a relaxation with respect to the minimization defining $\hatdl_{\rho}(f,g)$ as stated in \eqref{d-rho-hat-def}. The upper bound follows from the fact that if the constraints in the definition of $\eta_\rho^{+}(f,g)$ hold, then for any point $x \in S$ and $\eta \geq 0$ there exists an index $k$ of an element in the box partition, $y\in Y_\eta(x)$ and $y^k\in Y_\eta(l^k)$ such that
	\begin{align}
		\label{eq:1}
		f(x+y)+\eta \geq f\left(l^{k}+y^k\right)+\eta \geq \min \left\{g(u^{k}), \rho\right\} \geq \min \left\{g(x), \rho\right\} .
	\end{align}
	
	An analogous argument holds when reversing the roles of the functions $f$ and $g$. Thus, if $\eta$ satisfies the constraints in \eqref{eq:1}, it will also satisfy the constraints in the definition of $\hatdl_{\rho}(f, g)$ in \eqref{d-rho-hat-def}.
	
	Finally, if $f, g$ satisfy the Lipschitz condition with modulus $\kappa$, we consider the difference between the upper and lower bounds. Suppose that $\eta \geq 0$ is such that there exists $y^k\in Y_\eta(l^k)$ that satisfies
	\begin{align*}
		f(l^{k}+y^k)+\eta \geq \min \left\{g(l^{k}), \rho\right\} \quad \forall \ k=1, \ldots, N.
	\end{align*}
	Since $g$ is nondecreasing and Lipschitz continuous with modulus $\kappa$
	\begin{align*}
		g(l^{k}) \geq g(u^{k})-\kappa \mesh(\mathcal{R}) \quad \forall \ k=1, \ldots, N.
	\end{align*}
	
	We then also have that for $\eta^{\prime}=\eta+\kappa \mesh(\mathcal{R})$, there exists $y^\prime\in Y_{\eta^\prime}(l^k)$ such that
	\begin{align*}
		f(l^{k}+y^{\prime})+\eta^{\prime} \geq f(l^{k}+y^k)+\eta^{\prime} &\geq \min \left\{g(l^{k}), \rho\right\}+\kappa \mesh(\mathcal{R}) \\
		& \geq \min \left\{g(l^{k})+\kappa \mesh(\mathcal{R}), \rho\right\} \\
		&\geq \min \left\{g(u^{k}), \rho\right\}
	\end{align*}
	for all $k=1, \ldots, N$. A similar argument establishes the result, after reversing the roles of $f$ and $g$.
\end{proof}

The Lipschitz case of Theorem~\ref{thm:app-hatdistance} provides control of the bound between the two quantities $\eta_{\rho}^{+}(f,g)$ and $\eta_{\rho}^{-}(f,g)$, depending on the mesh size of the partition. In particular, when applied to an infinite refinement $\{\mathcal{R}^\nu\}$ with $\mesh(\mathcal{R}^\nu)\to 0$, these two quantities converge and coincide in the limit. As a result, computing these approximate $\eta$-quantities facilitates the estimation of the hat-distance, $\hatdl_{\rho}(f,g)$

\begin{remark}[Differences between $\dl, \dl_\rho$]
	From the definition of the hypo-distance, we note that each truncated $\rho$-distance is weighted by $e^{-\rho}$, thus for larger values of $\rho$, $\dl_\rho(f,g)$ have a decreasing contribution in the computation of the final hypo-distance. 
	
	For example, in the context of cdfs defined on the interval $[0,1]$ (see definition and properties in Section~\ref{ssec:cdf}), let $G$ be the cdf of $\delta_{1/2}$\footnote{Dirac's distribution at $x_0$, i.e $\delta_{x_0}=1$ if $x=x_0$ and $\delta_0=0$ elsewhere.}, and $F$ be the cdf of $\delta_1$, as is depicted in Figure~\ref{fig:remark2.13}. 
	
	\begin{figure}[ht!]
		\centering
		\tikzset{every picture/.style={line width=0.75pt}} 
		\begin{tikzpicture}[x=0.75pt,y=0.75pt,yscale=-1,xscale=1]
			\draw    (16,200) -- (462,200) ;
			\draw [shift={(465,200)}, rotate = 180] [fill={rgb, 255:red, 0; green, 0; blue, 0 }  ][line width=0.08]  [draw opacity=0] (8.93,-4.29) -- (0,0) -- (8.93,4.29) -- cycle    ;
			\draw [shift={(13,200)}, rotate = 0] [fill={rgb, 255:red, 0; green, 0; blue, 0 }  ][line width=0.08]  [draw opacity=0] (8.93,-4.29) -- (0,0) -- (8.93,4.29) -- cycle    ;
			\draw    (194,21) -- (194,202) ;
			\draw [shift={(194,18)}, rotate = 90] [fill={rgb, 255:red, 0; green, 0; blue, 0 }  ][line width=0.08]  [draw opacity=0] (8.93,-4.29) -- (0,0) -- (8.93,4.29) -- cycle    ;
			\draw  [dash pattern={on 4.5pt off 4.5pt}]  (22,198.5) -- (254.2,198) ;
			\draw  (22,198.5) -- (303.82,198.5) ;
			\draw    (306.8,80) -- (461,81) ;
			\draw  [dash pattern={on 4.5pt off 4.5pt}]  (254.4,80.4) -- (391.6,79.9) ;
			\draw  [color={rgb, 255:red, 0; green, 0; blue, 0 }  ,draw opacity=1 ][line width=0.75]  (157,226) -- (227,226) -- (227,166) -- (157,166) -- cycle ;
			(314,315) -- (74,315) -- cycle ;
			\draw  [color={rgb, 255:red, 0; green, 0; blue, 0 }  ,draw opacity=1 ][line width=0.75]  (74,227) -- (74,72) -- (314,72) -- (314,227);
			\draw (182,73.4) node [anchor=north west][inner sep=0.75pt]  [xscale=0.9,yscale=0.9]  {$1$};
			\draw (443.5,210.4) node [anchor=north west][inner sep=0.75pt]  [xscale=0.9,yscale=0.9]  {$x $};
			\draw (248.5,210.4) node [anchor=north west][inner sep=0.75pt]  [xscale=0.9,yscale=0.9]  {$\frac{1}{2} $};
			\draw (303,210.4) node [anchor=north west][inner sep=0.75pt]  [xscale=0.9,yscale=0.9]  {$1$};
			\draw (337,129.4) node [anchor=north west][inner sep=0.75pt]  [xscale=0.9,yscale=0.9]  {$F$};
			\draw (254,198.4) circle (2.5pt);
			\filldraw[black] (254,79.4) circle (2.5pt);
			\draw (306,198.4) circle (2.5pt);
			\filldraw[black] (306,79.4) circle (2.5pt);
			\draw (273,84.4) node [anchor=north west][inner sep=0.75pt]  [xscale=0.9,yscale=0.9]  {$G$};
			\draw (140,162.4) node [anchor=north west][inner sep=0.75pt]  [xscale=0.9,yscale=0.9]  {$\rho _{1}$};
			\draw (48,140.4) node [anchor=north west][inner sep=0.75pt]  [xscale=0.9,yscale=0.9]  {$\rho _{2}$};
		\end{tikzpicture}
		\caption{\red{This figure illustrates how the hypo-distance calculation (denoted by $\dl_\rho$) can vary depending on the chosen "box size" parameter (represented by $\rho$). We consider two boxes with $0<\rho_1<\rho_2$.  Let $F$ and $G$ be delta functions at points 1 and  $\frac{1}{2}$, respectively. Within the smaller box ($\rho_1$), the hypo-distance between $F$ and $G$ is zero (denoted by $\dl_{\rho}(F,G) = 0$). However, when we consider the larger box ($\rho_2$)), both the hypo-distance ($\dl$) and the $\rho$-distance ($\dl_\rho$) become strictly positive.}}
		\label{fig:remark2.13}
	\end{figure}
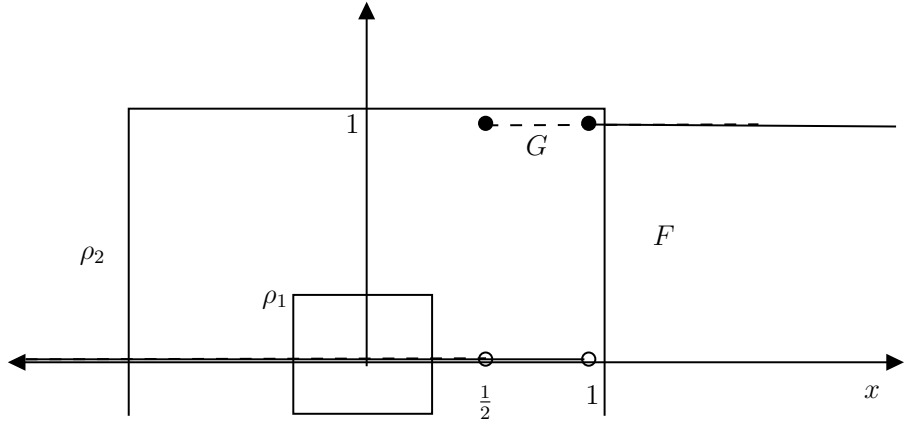
	
	Consider rectangles of sizes $0< \rho\leq\frac{1}{4}$.  
	We have that $\dl_{\rho}(F,G) = 0$, attributable to both $F$ and $G$ being equal to 0 over this rectangle.  For $\frac{1}{4} < \rho\leq\frac{1}{2}$, $\dl_{\rho}(F,G) = 2\rho-\frac{1}{2}$, and for $\frac{1}{2}< \rho$, $\dl_{\rho}(F,G) = \frac{1}{2}$.   However, $\dl(F,G) = (1/2)e^{-1}-2e^{-1/2}+2e^{-1/4}$. (This comes from computing the formula and the preceding calculations.)
\end{remark}

We end this section by describing the space $\cdfcns(S)$, and study its properties as a subset of $({\uscfcns}_+(S),\dl)$.

\subsection{The set of cumulative distribution functions}\label{ssec:cdf}

Recall from probability theory that every probability measure $P$ on the measurable space $(S, \mathcal{B}_S)$, with $\mathcal{B}_S$ the Borel $\sigma$-algebra over $S$, defines a cdf $F: S \to [0, 1]$ through $F(x) = P(B_x)$, for $x \in S$, where $B_x := \{y \in S\,\mid\, y \le x\}$.  Here, vector inequalities are understood component-wise.  Let us define the concept of cdfs.

\begin{definition}[Cumulative distribution function]\label{def:cdf}
	\label{cdf_def}
	Let $F:S\to[0,1]$ be a function defined over rectangular $S=[\alpha_1,\beta_1]\times\cdots\times[\alpha_m,\beta_m]$.  We say that $F$ is a cumulative distribution function (cdf) if $F\in\uscfcns_{+}(S)$ and it also satisfies
	\begin{enumerate}[label=(\roman*)]
		\item $F\left(x^\nu\right) \rightarrow 0$ if there is a component $i$ such that $x_i^\nu\to \alpha_i$,
		\item $F\left(x^{\nu}\right) \rightarrow 1$ if $x^{\nu} \rightarrow (\beta_1,\ldots,\beta_m)$; and
		\item the {\bf distribution condition}:  For every rectangle $A\subset S$, determined by its vertices $\{v^1,\ldots,v^{2^{m}}\}$\footnote{A rectangle $A$ defined by its vertices $\{v^1,\ldots,v^{n}\}$, can be described as $A=\mathop{\rm con}(\{v^1,\ldots,v^{n}\})$, i.e., the convex hull of $\{v^1,\ldots,v^{n}\}$.}, $F$ satisfies 
		\begin{equation}
			\label{eq:distcond}\Delta_{A} F:=\sum_{j=1}^{2^{m}}\left(\operatorname{sgn}_{A} v^{j}\right) F\left(v^{j}\right)\geq 0,
		\end{equation}
		where $\operatorname{sgn}_{A} v^{j}=1$ if the number of components $v_{i}^{j}$ at a lower bound of $A$ is even and $\operatorname{sgn}_{A} v^{j}=-1$ if the number is odd.
	\end{enumerate}
\end{definition}

We denote by $\mathcal{M}$ the set of all probability measures on $(S,\mathcal{B}_{S})$ and  denote the set of corresponding cdfs by
$$
\cdfcns(S):=\left\{F: S \rightarrow[0,1]\mid \exists P \in \mathcal{M} \text { with } F(x)=P\left(B_{x}\right), \forall x \in S\right\}.
$$
This set of functions is convex (in a pointwise manner) and it is also a subset of $\uscfcns(S)$. Furthermore, the hypo-distance between elements of this space is still bounded by 1, $\dl(f,g)\leq 1$ for any functions $f,g  \in \cdfcns(S)$, by virtue of Proposition~\ref{prop:less1}.

\red{Note that for a function $F\in\cdfcns_{+}(S)$, there exists an associated unique measure $\mu_F$ over $S$ such that for every rectangle $A\subset S$, $\mu_F(A)=\Delta_A\,F$ \cite[Thm.12.5]{billingsley1995}.  Thus, we will explore connections between cdfs and probability measures from a topological perspective.}

Convergence of probability measures is often guaranteed by compactness criteria.  One such is the \emph{tightness} condition, where a family of distribution measures cannot escape to infinity.  Let us recall its definition:

\begin{definition}[Tightness]
	A subset $C \subset\cdfcns(S)$ is {\bf tight} if for all $\varepsilon>0$ there exists a rectangle $A\subset S$ such that $\Delta_{A} F \geq 1-\varepsilon$, for every function $F \in C$.
\end{definition}

\red{The concept of tightness introduced here for families of functions in $\cdfcns(S)$ coincides with the classical definition of tightness for probability measures. In probability theory, a family of measures $\{\mu_\alpha\}_{\alpha\in\Lambda}$ on the real numbers is tight if for every $\varepsilon>0$ there exists a bounded rectangle $A$ such that $\sup_{\alpha\in\Lambda} \mu_\alpha(A)>1-\varepsilon$. 
	
	We can leverage this equivalence by associating each $F$ in a subset $C \subset\cdfcns(S)$  with a measure $\mu_F$.  This measure is defined over bounded rectangles $A$ as $\mu_F(A)=\Delta_A F$.  Consequently,  tightness of the set $C$ is equivalent to tightness of the family of measures $\{\mu_F:F\in C\}$.}

The connection between tightness and sequential convergence of measures is covered in \cite{billingsley1995}.  In particular, in Polish spaces, tightness is shown to be equivalent to pre-compactness in the topology of weak convergence, as demonstrated by Prokhorov's theorem \cite{Prokhorov}.

\begin{remark}[A topological remark from \cite{royset2017variational}]\label{rem:tight}  From \cite[Cor.~4.43]{rockafellar2009variational}, \red{in the case when $S$ is a nonempty closed subset of $\reals^m$,} we know that for a function $F \in \cdfcns(S)$ and for $r>0$, the ball centered at $F$ of radius $r$,
	\[\ball_{\dl}(F,r):= \{G \in \uscfcns(S) \ | \ \dl(F,G)\leq r \},\]
	is a compact set of $(\uscfcns(S),\dl)$.  However, the subset $\ball_{\dl}(F, r) \cap \cdfcns(S)$ is neither closed nor tight unless $r=0$.
	
	To see this, define the function $g: S \rightarrow[0,1]$ as $g(x)= \max \{0, F(x)-r\}\text{ and }r>0.$
	
	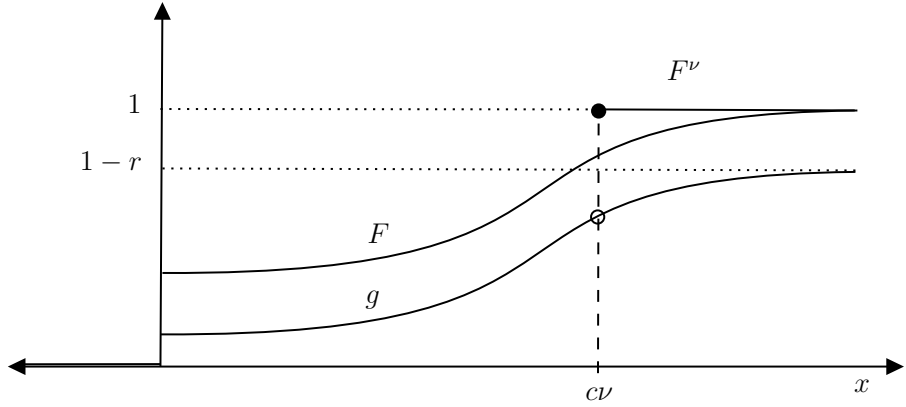
\begin{figure}[ht!]
		\centering
		\tikzset{every picture/.style={line width=0.75pt}} 
		\begin{tikzpicture}[x=0.75pt,y=0.75pt,yscale=-1,xscale=1]
			\draw    (97,190) -- (543,190) ;
			\draw [shift={(546,190)}, rotate = 180] [fill={rgb, 255:red, 0; green, 0; blue, 0 }  ][line width=0.08]  [draw opacity=0] (8.93,-4.29) -- (0,0) -- (8.93,4.29) -- cycle    ;
			\draw [shift={(94,190)}, rotate = 0] [fill={rgb, 255:red, 0; green, 0; blue, 0 }  ][line width=0.08]  [draw opacity=0] (8.93,-4.29) -- (0,0) -- (8.93,4.29) -- cycle    ;
			\draw    (171.98,9) -- (171,190) ;
			\draw [shift={(172,6)}, rotate = 90.31] [fill={rgb, 255:red, 0; green, 0; blue, 0 }  ][line width=0.08]  [draw opacity=0] (8.93,-4.29) -- (0,0) -- (8.93,4.29) -- cycle    ;
			\draw  [dash pattern={on 0.84pt off 2.51pt}]  (172,90) -- (522,91) ;
			\draw  [dash pattern={on 4.5pt off 4.5pt}]  (391.67,193.33) -- (392,60) ;
			\draw    (171,173.75) .. controls (420.67,174.83) and (308,94) .. (521.5,91.75) ;
			\draw  [dash pattern={on 0.84pt off 2.51pt}]  (171,60) -- (394.8,60.2) ;
			\draw    (101,188.83) -- (171,188.75) ;
			\draw    (392.8,60.2) -- (521.14,60.71) ;
			\draw    (172,142.75) .. controls (421.67,143.83) and (309,63) .. (522.5,60.75) ;
			\draw (129,80.4) node [anchor=north west][inner sep=0.75pt]  [xscale=0.9,yscale=0.9]  {$1-r$};
			\draw (384,199.4) node [anchor=north west][inner sep=0.75pt]  [xscale=0.9,yscale=0.9]  {$c\nu $};
			\draw (153,51.4) node [anchor=north west][inner sep=0.75pt]  [xscale=0.9,yscale=0.9]  {$1$};
			\draw (273,150.4) node [anchor=north west][inner sep=0.75pt]  [xscale=0.9,yscale=0.9]  {$g$};
			\draw (425,34.4) node [anchor=north west][inner sep=0.75pt]  [xscale=0.9,yscale=0.9]  {$F^{\nu }$};
			\draw (519.5,195.4) node [anchor=north west][inner sep=0.75pt]  [xscale=0.9,yscale=0.9]  {$x $};
			\draw (274,116.4) node [anchor=north west][inner sep=0.75pt]  [xscale=0.9,yscale=0.9]  {$F$};
			\draw (391.5,114.5) circle (2.5pt);
			\filldraw[black] (392,61.0) circle (2.5pt);
		\end{tikzpicture}
		\caption{From Remark~\ref{rem:tight}, a sequence of cdfs contained in a ball of radius $r$, that is nor tight neither convergent to a distribution function}
		\label{fig:tight}
	\end{figure}
	
	Then, it is easy to see that $g\in\ball_{\dl}(F,r)$ (i.e., $\dl(g, F) \leq r$). Consider a rectangle $A \subset S$, such that $\Delta_{A}(F) \geq 1-r$, and set $c>0$ such that $A \subset\left\{x \in S: x \leq c \mathbf{1}\right\}$.  Construct the sequence  $\{F^{\nu}\}_{\nu\in\nats}\subset \cdfcns(S)$ as follows 
	\begin{align*}
		F^{\nu}(x) = \left\{\begin{array}{cl}
			1 &\text{ if}\  c \nu \mathbf{1} \leq x\\
			g(x) &\text{ otherwise},\\
		\end{array}\right.
	\end{align*}
	as depicted in Figure~\ref{fig:tight}.  Since $F(x) \geq 1-r$ for $x$ with $F^{\nu}(x)=1$, we have that $\left|F^{\nu}(x)-F(x)\right| \leq r$ for all $x\in S$ and thus $\dl\left(F^{\nu}, F\right) \leq r$, i.e., $\{F^\nu\}_{\nu\in\nats}\subset\ball_{\dl}(F,r)$.   However, $\left\{F^{\nu}\right\}_{\nu\in\nats}$ is not tight and does not tend to a distribution function.
	
	The only balls of $\left(\uscfcns(S), \dl\right)$ contained in $\cdfcns(S)$ are those with zero radius, i.e., $\ball_{\dl}(F, 0)$. We observe that a setup centered on the metric space $\left(\cdfcns(S), \dl\right)$, instead of $\left(\uscfcns(S), \dl\right)$, is possible but has the disadvantage that the space is not complete.
\end{remark}

\begin{example}[Closure of the distribution condition under hypo-convergence]\label{ex:cdfclosure}
	Let $S$ be a compact rectangular subset of $\reals^2$.  If a sequence $\{F^\nu\}_{\nu\in\nats}\subset \uscfcns_{+}(S)$ such that for each $\nu$, $F^\nu$ satisfy the distribution condition (Def.~\ref{def:cdf}-\eqref{eq:distcond}), and hypo-converges to some function $F$, then it is not true that the limit satisfies the distribution condition, i.e.,  $\Delta_A F\geq0$ for every rectangle $A\subset S$. To see this, consider a rectangle $S\subset \reals^2$, and let $A=[x_1,x_2]\times [y_1,y_2]\subset S$ be a rectangle on the upper right corner of the domain depicted by Figure~\ref{fig:cdfclosure}.
	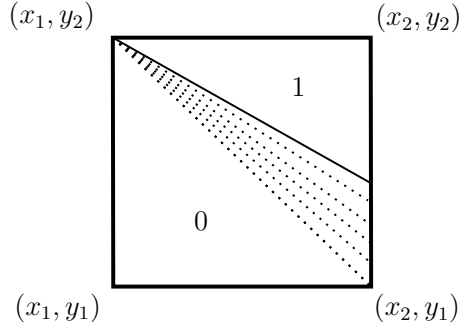
\begin{figure}[ht!]
		\centering
		\tikzset{every picture/.style={line width=0.75pt}} 		
		\begin{tikzpicture}[x=0.75pt,y=0.75pt,yscale=-1,xscale=1]
			\draw  [line width=1.5]  (250.28,19.82) -- (380.34,19.82) -- (380.56,145.45) -- (250.5,145.45) -- cycle ;
			\draw [line width=0.75]  [dash pattern={on 0.84pt off 2.51pt}]  (250.28,19.82) -- (380.56,145.45) ;
			\draw [line width=0.75]    (250.28,19.82) -- (380,93) ;
			\draw [line width=0.75]  [dash pattern={on 0.84pt off 2.51pt}]  (250.28,19.82) -- (379,102) ;
			\draw [line width=0.75]  [dash pattern={on 0.84pt off 2.51pt}]  (250.28,19.82) -- (380,114) ;
			\draw [line width=0.75]  [dash pattern={on 0.84pt off 2.51pt}]  (250.28,19.82) -- (381,124) ;
			\draw [line width=0.75]  [dash pattern={on 0.84pt off 2.51pt}]  (250.28,19.82) -- (381,134) ;
			\draw [line width=0.75]  [dash pattern={on 0.84pt off 2.51pt}]  (250.28,19.82) -- (380.56,145.45) ;
			\draw (199.35,147.53) node [anchor=north west][inner sep=0.75pt]  [xscale=0.9,yscale=0.9]  {$( x_{1} ,y_{1}) \ $};
			\draw (339,38.4) node [anchor=north west][inner sep=0.75pt]  [xscale=0.9,yscale=0.9]  {$1$};
			\draw (284,103.4) node [anchor=north west][inner sep=0.75pt]  [xscale=0.9,yscale=0.9]  {$ \begin{array}{l}
					0\\
				\end{array}$};
			\draw (381.35,1.4) node [anchor=north west][inner sep=0.75pt]  [xscale=0.9,yscale=0.9]  {$( x_{2} ,y_{2}) \ $};
			\draw (196.35,0.4) node [anchor=north west][inner sep=0.75pt]  [xscale=0.9,yscale=0.9]  {$( x_{1} ,y_{2}) \ $};
			\draw (380.56,147.85) node [anchor=north west][inner sep=0.75pt]  [xscale=0.9,yscale=0.9]  {$( x_{2} ,y_{1}) \ $};
		\end{tikzpicture}
		\caption{From Example~\ref{ex:cdfclosure}, the dotted lines represent (the level curves of) $\{F^\nu\}_{\nu\in\nats}$, with each $F^\nu$ satisfying the distribution condition. However, while this condition holds for every $F^\nu$, it fails to hold for its hypo-limit.}
		\label{fig:cdfclosure}
	\end{figure}
	
	Let $\{F^\nu\}_{\nu\in\nats}$ be a sequence such that $F^\nu(z_1,z_2) = 1$ if $z_1\geq x_1$ and $ z_2 \geq (1/\nu)y_2+(1-1/\nu)y_1$, and $0$ otherwise.
	This sequence is nondecreasing, usc, and also satisfies the distribution condition over $A$:
	$$\Delta_A F^\nu = F^\nu(x_2,y_2)-F^\nu(x_1,y_2)+F^\nu(x_1,y_1)-F^\nu(x_2,y_1) = 1-1-0+0 = 0.$$
	Also, the sequence hypo-converges to a function $F$ that is $0$ everywhere except in the upper triangle. We have that $F(x_2,y_1) = 1$, hence $\Delta_A F = -1$. This proves that the subset of $\uscfcns_+(S)$ satisfying the distribution condition is not closed in the hypo-distance topology.
\end{example}

We are interested in establishing some closedness property in $({\uscfcns}_+(S),\dl)$ with respect to the family of cdfs.  As tightness is not enough to guarantee that the hypo-limit of cdfs is a cdf as well, we propose to study equi-usc; see \cite{dolecki1983equiusc}.

\begin{definition}[equi-usc]
	The sequence $\left\{f^{\nu}:S\to[0,1]\right\}_{\nu \in \nats}$ is equi-usc at $\bar{x}\in S$ if for every $\varepsilon>0$ there exists $\delta>0$ with
	$$
	f^{\nu}(x) \leq f^{\nu}(\bar{x})+\varepsilon, \text { for all } \nu \in \nats \text { when } x \in S,\|x-\bar{x}\|_{\infty} \leq \delta.
	$$
\end{definition}

\begin{proposition}
	\label{prop:close-limits}
	Let $\{F^{\nu}\}_{\nu\in\nats}$ be a sequence in $\uscfcns(S)$ that is equi-usc at $(\alpha_1,\ldots,\alpha_m)$, and satisfies for all $\nu\in\nats$:
	\begin{align}
		&\lim_{k\to\infty} F^\nu(x^k) = 0,\,\text{whenever}\,\,x_i^k\to\alpha_i\,\,\text{for } i=1,\ldots,m, \label{limitcdf2}\\
		&\lim_{x \to (\beta_1,\ldots,\beta_m)}F^\nu(x) = 1. \label{limitcdf1}
	\end{align}
	If $F^\nu \hto F$ for some $F \in \uscfcns(S)$, then $F$ also satisfy \eqref{limitcdf2}-\eqref{limitcdf1}.
\end{proposition}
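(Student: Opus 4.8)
The plan is to turn $F^\nu\hto F$ into its pointwise form and then feed in the two boundary hypotheses, invoking equi-usc exactly at the one place a uniform-in-$\nu$ estimate is needed. Since $\dl(F^\nu,F)\to 0$ is, by the Attouch--Wets reading of the hypo-distance, equivalent to set-convergence $\hypog F^\nu\to\hypog F$ (that is, $\LimInn\hypog F^\nu=\LimOut\hypog F^\nu=\hypog F$; see \cite{Royset:2020aa,rockafellar2009variational}), I would first record the standard consequence for usc functions: (a) whenever $x^\nu\to x$ one has $\limsup_\nu F^\nu(x^\nu)\le F(x)$, and (b) for each $x$ there is a recovery sequence $x^\nu\to x$ with $F^\nu(x^\nu)\to F(x)$. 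These follow by reading the inclusions $\LimOut\hypog F^\nu\subset\hypog F$ and $\hypog F\subset\LimInn\hypog F^\nu$ at the points $(x^\nu,F^\nu(x^\nu))$ and $(x,F(x))$.

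For \eqref{limitcdf2}, write $\bar x=(\alpha_1,\ldots,\alpha_m)\in S$. First, applying \eqref{limitcdf2} for each fixed $F^\nu$ to the constant sequence $x^k\equiv\bar x$ gives $F^\nu(\bar x)=0$ for all $\nu$. The key step is then equi-usc at $\bar x$: given $\varepsilon>0$ it produces a single $\delta>0$, \emph{independent of} $\nu$, with $F^\nu(x)\le F^\nu(\bar x)+\varepsilon=\varepsilon$ for all $\nu$ and all $x\in\ball(\bar x,\delta)$. This uniform ceiling is exactly what lets me pass to the limit: fixing any $x$ with $\|x-\bar x\|_\infty<\delta$ and taking the recovery sequence $x^\nu\to x$ from (b), I have $x^\nu\in\ball(\bar x,\delta)$ for large $\nu$, whence $F(x)=\lim_\nu F^\nu(x^\nu)\le\varepsilon$. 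As $F\ge 0$ and $\varepsilon$ is arbitrary, $F(x)\to 0$ as $x\to\bar x$, which is \eqref{limitcdf2} for $F$. (If some $\alpha_i=-\infty$ the same computation runs along sequences approaching the corner.)

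For \eqref{limitcdf1}, write $\bar y=(\beta_1,\ldots,\beta_m)$; here I would first secure the corner value. Using \eqref{limitcdf1} for each $F^\nu$, choose $\xi^\nu\to\bar y$ with $F^\nu(\xi^\nu)\ge 1-1/\nu$; then $(\xi^\nu,1-1/\nu)\in\hypog F^\nu$ converges to $(\bar y,1)$, so $(\bar y,1)\in\LimInn\hypog F^\nu=\hypog F$ and hence $F(\bar y)=1$ (using $F\le 1$). Upper semicontinuity of $F$ then gives $\limsup_{x\to\bar y}F(x)\le F(\bar y)=1$.

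I expect the main obstacle to be upgrading this to the full limit in \eqref{limitcdf1}, i.e. establishing $\liminf_{x\to\bar y}F(x)\ge 1$. The naive route---fix $x$ near $\bar y$, invoke the recovery sequence, and let $\nu\to\infty$---does not transfer a lower bound, because property (a) only yields \emph{upper} bounds on $\limsup_\nu F^\nu(x^\nu)$; interchanging ``$x\to\bar y$'' with ``$\nu\to\infty$'' is precisely the difficulty, and without a uniform-in-$\nu$ lower estimate near $\bar y$ a large value present in each $F^\nu$ can be lost in the hypo-limit. I therefore anticipate this step requires the analogue of equi-usc adapted to the upper corner (equivalently equi-usc of $1-F^\nu$, i.e. equi-lsc of $F^\nu$, at $\bar y$) or the monotone/cdf structure of the $F^\nu$; mirroring the \eqref{limitcdf2} argument under such control would then give $\liminf_{x\to\bar y}F(x)\ge 1-\varepsilon$ for every $\varepsilon>0$ and complete \eqref{limitcdf1}. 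Absent extra control near $\bar y$, the argument above guarantees only $F(\bar y)=1$.
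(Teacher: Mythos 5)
Your argument for \eqref{limitcdf2} is correct and is in substance the paper's own: the paper invokes \cite[Thm.~7.10]{rockafellar2009variational} to get pointwise convergence at the lower corner from equi-usc, so that $F(\alpha_1,\ldots,\alpha_m)=0$, and then (implicitly) uses upper semicontinuity of $F$ to pass to the full limit \eqref{limitcdf2}; you reach the same conclusion by combining the $\nu$-uniform ceiling from equi-usc with a recovery sequence. It is the same mechanism, with your version making the role of the uniformity more explicit.

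For \eqref{limitcdf1}, the obstacle you identify is genuine, and it is a gap in the paper's proof rather than a weakness of your attempt. The paper fixes $\nu$, chooses $\delta>0$ with $F^\nu>1-\varepsilon$ on $\ball((\beta_1,\ldots,\beta_m),\delta)$, and then asserts $F(x)\geq\limsup_\nu F^\nu(x)>1-\varepsilon$; but that $\delta$ depends on $\nu$, so for a fixed $x\neq(\beta_1,\ldots,\beta_m)$ there is no reason why $F^\nu(x)>1-\varepsilon$ should hold for infinitely many $\nu$. The displayed chain is justified only at $x=(\beta_1,\ldots,\beta_m)$ itself, i.e., the paper's proof establishes exactly what yours does: $F(\beta_1,\ldots,\beta_m)=1$. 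In fact the full claim \eqref{limitcdf1} is false under the stated hypotheses: take $S=[0,1]$ and $F^\nu(x)=1$ if $x\geq\max\{1-1/\nu,1/2\}$, $F^\nu(x)=0$ otherwise. Each $F^\nu$ is usc (indeed nondecreasing), satisfies \eqref{limitcdf2} and \eqref{limitcdf1}, and the family is equi-usc at $0$ since every $F^\nu$ vanishes on $[0,1/4]$. The hypographs set-converge to the hypograph of the function $F$ with $F(1)=1$ and $F(x)=0$ for $x<1$; since set convergence of nonempty closed sets in finite dimensions coincides with convergence in the Attouch--Wets distance, this gives $F^\nu\hto F$. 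This limit $F$ fails \eqref{limitcdf1} along $x^k=1-1/k$. So the extra control you anticipated --- equi-lsc of $\{F^\nu\}$ at the upper corner (equivalently, equi-usc of $\{1-F^\nu\}$ there), or a tightness-type condition --- is not an artifact of your proof strategy but a genuinely needed additional hypothesis; once it is assumed, the constant-sequence inequality $F(x)\geq\limsup_\nu F^\nu(x)\geq 1-\varepsilon$, now valid on a $\nu$-independent ball around the corner, completes the proof exactly as you sketch.
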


\begin{proof}
	Let $\varepsilon>0$.  By virtue of  \cite[Thm.\,7.10]{rockafellar2009variational}, the equi-usc of the sequence $\{F^{\nu}\}_{\nu\in\nats}$ at $(\alpha_1,\ldots,\alpha_m)$ implies its pointwise convergence, i.e., $F^\nu(\alpha_1,\ldots, \alpha_m) \to F(\alpha_1,\ldots, \alpha_m)$. Thus $F(\alpha_1,\ldots, \alpha_m) = 0$.  Finally, for fixed $\nu\in\nats$, there exists $\delta>0$ for which
	\[F^\nu(x) > 1-\varepsilon\]
	for all $x$ that $\|x-(\beta_1,\ldots,\beta_m)\|_\infty<\delta$.  The hypo-convergence implies that
	\[1 \geq F(x)\geq\limsup_{\nu\to\infty}F^\nu(x) \red{\geq} 1-\varepsilon,\]
	which proves \eqref{limitcdf1} after letting $\varepsilon\to 0$.
\end{proof}

\begin{example}\label{ex:equicex}
	We cannot omit the hypothesis of equi-usc at the point $(\alpha_1,\ldots,\alpha_m)$ in Proposition~\ref{prop:close-limits}. 
	Define 
	\[F^{\nu}(x) =\begin{cases}
		0 &\text{ if}\  x_i \in \left[\alpha_i, \alpha_i+\frac{|\beta_i-\alpha_i|}{\nu}\right),\, \forall i = 1,\ldots,m\\
		1 &\text{ otherwise}.
	\end{cases}\]
	Then, $F^\nu \hto F$, with $F$ being the function that has $F(x) = 1$ for every $x$ with $x_i \neq \alpha_i$ for all $i$, and has $F(\alpha_1,\ldots,\alpha_m) = 0$ if and only if $\lim_{\nu}F^\nu(\alpha_1,\ldots,\alpha_m) = F(\alpha_1,\ldots,\alpha_m)$. Figure~\ref{fig:equicex} illustrates this phenomena by means of a similar one-dimensional setting.
	
	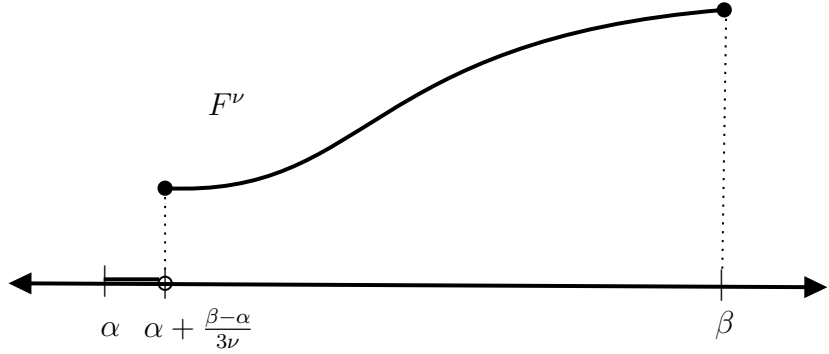
\begin{figure}[ht!]
		\centering
		\tikzset{every picture/.style={line width=0.75pt}} 		
		\begin{tikzpicture}[x=0.75pt,y=0.75pt,yscale=-1,xscale=1]
			\draw [line width=1.5]    (144,155.02) -- (549,156.98) ;
			\draw [shift={(553,157)}, rotate = 180.28] [fill={rgb, 255:red, 0; green, 0; blue, 0 }  ][line width=0.08]  [draw opacity=0] (11.61,-5.58) -- (0,0) -- (11.61,5.58) -- cycle    ;
			\draw [shift={(140,155)}, rotate = 0.28] [fill={rgb, 255:red, 0; green, 0; blue, 0 }  ][line width=0.08]  [draw opacity=0] (11.61,-5.58) -- (0,0) -- (11.61,5.58) -- cycle    ;
			\draw [line width=1.5]    (219,107) .. controls (323,112) and (317,32) .. (501,17) ;
			\draw  [dash pattern={on 0.84pt off 2.51pt}]  (502,17) -- (500,150) ;
			\draw  [dash pattern={on 0.84pt off 2.51pt}]  (219,107) -- (219,148) ;
			\draw [line width=1.5]    (188,153) -- (216,153) ;
			\draw (185,173.4) node [anchor=north west][inner sep=0.75pt]    {$\alpha$};
			\draw (495,167.4) node [anchor=north west][inner sep=0.75pt]    {$\beta$};
			\draw (185,144.4) node [anchor=north west][inner sep=0.75pt]    {$|$};
			\draw (215.43,145.4) node [anchor=north west][inner sep=0.75pt]    {$|$};
			\draw (496,146.4) node [anchor=north west][inner sep=0.75pt]    {$|$};
			\filldraw[black] (219,107) circle (2.5pt);
			\draw (239,57.4) node [anchor=north west][inner sep=0.75pt]    {$F^{\nu }$};
			\filldraw[black] (501,17) circle (2.5pt);
			\draw (206.71,166.26) node [anchor=north west][inner sep=0.75pt]    {$\alpha+\frac{\beta-\alpha}{3\nu }$};
			\draw (219,155) circle (2.5pt);
		\end{tikzpicture}
		\caption{One-dimensional instance of Example~\ref{ex:equicex}, where the concept of equi-usc at $\alpha$ becomes crucial in ensuring the validity of the limit \eqref{limitcdf2}.}
		\label{fig:equicex}
	\end{figure}	
\end{example}

\begin{corollary}
	\label{cor:dist-condition-closed}
	Let $A$ be any rectangle in $S$ with vertices $\{v_j\}_{j=1}^{2^m}$, and denote by $A^-$ the set of indexes such that $\operatorname{sign}_A v_j = -1$ and $A^+$ the ones with positive sign.  Consider a sequence $\{F^{\nu}\}_{\nu\in\nats}\subset\uscfcns(S)$ that is equi-usc at every vertex of $A^-$ and \eqref{eq:distcond} holds for every function of the sequence, i.e., $\Delta_A F^\nu \geq 0$, for all $\nu\in\nats$. \red{If $F^\nu$ hypo-converge to $F$, then $F$ satisfies} the distribution condition in \eqref{eq:distcond}, i.e., $\Delta_A F\geq 0$.  
\end{corollary}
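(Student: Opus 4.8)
The plan is to combine the two-sided characterization of hypo-convergence with the fact that equi-usc upgrades hypo-convergence to genuine pointwise convergence. Recall that $F^\nu \hto F$ is equivalent to the following holding at each $x \in S$: (a) $\limsup_\nu F^\nu(x^\nu) \leq F(x)$ for \emph{every} sequence $x^\nu \to x$; and (b) there is \emph{some} sequence $x^\nu \to x$ with $\liminf_\nu F^\nu(x^\nu) \geq F(x)$. Taking the constant sequence in (a) gives $\limsup_\nu F^\nu(x) \leq F(x)$ at every point, in particular at each vertex $v_j$.

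The decisive observation is that hypo-convergence alone yields inequalities in the correct direction only at the positive-sign vertices. Writing $\Delta_A F = \sum_{j\in A^+} F(v_j) - \sum_{j\in A^-} F(v_j)$, a lower bound on $\Delta_A F$ needs \emph{lower} bounds on $F(v_j)$ for $j \in A^+$ (which (a) supplies via $F(v_j) \geq \limsup_\nu F^\nu(v_j)$) but \emph{upper} bounds on $F(v_j)$ for $j \in A^-$, which (a) cannot give. This is precisely why the hypothesis demands equi-usc at the vertices of $A^-$: exactly as in the proof of Proposition~\ref{prop:close-limits} (via \cite[Thm.\,7.10]{rockafellar2009variational}), equi-usc at $v_j$ together with hypo-convergence promotes (a) and (b) into $F^\nu(v_j) \to F(v_j)$. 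One sees this directly by taking the recovery sequence $x^\nu \to v_j$ from (b) and applying the equi-usc estimate $F^\nu(x^\nu) \leq F^\nu(v_j) + \varepsilon$ for large $\nu$, which gives $\liminf_\nu F^\nu(v_j) \geq F(v_j) - \varepsilon$; with (a) and $\varepsilon \downarrow 0$ this forces convergence.

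With these facts in place, I would pass to the limit in $\Delta_A F^\nu \geq 0$. For each $\nu$ this reads $\sum_{j\in A^+} F^\nu(v_j) \geq \sum_{j\in A^-} F^\nu(v_j) \geq 0$, so taking $\limsup_\nu$ of the left-hand side keeps it nonnegative. Since $F^\nu(v_j) \to F(v_j)$ for every $j \in A^-$, the subtracted sum converges to $\sum_{j\in A^-} F(v_j)$ and can be extracted from the limsup; the surviving term satisfies $\limsup_\nu \sum_{j\in A^+} F^\nu(v_j) \leq \sum_{j\in A^+}\limsup_\nu F^\nu(v_j) \leq \sum_{j\in A^+} F(v_j)$ by subadditivity of $\limsup$ and condition (a). Chaining these estimates yields $0 \leq \sum_{j\in A^+} F(v_j) - \sum_{j\in A^-} F(v_j) = \Delta_A F$. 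Since all values lie in $[0,1]$, every sum is finite and no $\infty - \infty$ ambiguity arises.

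The main obstacle is conceptual rather than computational: recognizing that the sign pattern of the distribution condition pins the equi-usc requirement precisely to $A^-$, with $A^+$ handled for free by the one-sided nature of hypo-convergence. Example~\ref{ex:cdfclosure} confirms the necessity, since there the sequence fails to converge at the $A^-$ vertex $(x_2,y_1)$ and $\Delta_A F$ turns negative. The only technical care needed is the routine bookkeeping of a $\limsup$ applied to a sum with one convergent block and one merely bounded block, which is immediate from the uniform $[0,1]$ bounds.
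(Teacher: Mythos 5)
Your proof is correct and takes essentially the same route as the paper's: hypo-convergence supplies the one-sided bound $\limsup_\nu F^\nu(v_j) \leq F(v_j)$ at the vertices in $A^+$, equi-usc upgrades hypo-convergence to pointwise convergence $F^\nu(v_j) \to F(v_j)$ at the vertices in $A^-$, and passing to the limit in $\Delta_A F^\nu \geq 0$ yields $\Delta_A F \geq 0$. The only cosmetic difference is that you prove the equi-usc pointwise-convergence step directly with an $\varepsilon$-argument on the recovery sequence, whereas the paper delegates it to the argument of Proposition~\ref{prop:close-limits} (via \cite[Thm.\,7.10]{rockafellar2009variational}), and your bookkeeping extracts the convergent $A^-$ block from a single $\limsup$ rather than chaining the inequalities as the paper does.
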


\begin{proof}
	The property $\Delta_A F^\nu \geq 0$ can be written as
	\[\sum_{j \in A^{+}}F^\nu(v_j)\geq \sum_{j \in A^{-}}F^\nu(v_j),\]
	\red{and we would like to show that this remains true for $F$. From the hypo-convergence of $\{F^\nu\}$, 
		\[\sum_{j \in A^{+}}F^\nu(v_j)\geq \sum_{j \in A^{-}}F(v_j),\]
		and on the other hand,}
	\[ \begin{aligned}\sum_{j \in A^{+}}F(v_j)\geq \limsup_\nu\sum_{j \in A^{+}}F^\nu(v_j)&\geq \limsup_\nu \sum_{j \in A^{-}}F^\nu(v_j) \\
		&\geq \liminf_\nu \sum_{j \in A^{-}}F^\nu(v_j)\geq \sum_{j \in A^{-}}\liminf_\nu F^\nu(v_j).
	\end{aligned}\]
	Additionally, the equi-usc at every $v_j \in A^-$ implies that $\liminf_\nu F^\nu(v_j) = F(v_j)$ which concludes the proof. 
\end{proof}

\subsection{Epi-splines}

In this section, we adapt the theory of epi-splines developed in the context of \red{extended real-valued} lsc functions in \cite{royset2016multivariate,royset2016erratum} where they are defined as piecewise polynomial functions that can approximate any lsc function to an arbitrary level of accuracy. The following Proposition~\ref{prop:epi-usc} and Theorem~\ref{thm:denseapproximation} are adaptations of  \cite[Prop.\,3.3]{royset2016multivariate,royset2016erratum}  and \cite[Thm.\,3.5]{royset2016multivariate,royset2016erratum}  for  usc functions which allow us to reformulate an approximating, finite-dimensional estimation problem.  We continue using the name \textit{epi-spline} even though the definition is re-oriented towards usc functions.  \red{The adapted definition epi-splines \cite[Def.\,3.2]{royset2016multivariate,royset2016erratum} to the $\uscfcns(S)$ setting can be stated as}:

\begin{definition}[Epi-spline] 
	An epi-spline (in the \red{usc} sense) $s :S \rightarrow \red{[0,1]}$ of order $p \in \nats_0$ with box partition
	$\mathcal{R}=\left\{R_{k}\right\}_{k=1}^{N}$ of $S$, is a function that
	\begin{enumerate}
		\item on each $R_{k}, k=1, \ldots, N$, is polynomial of total degree $p$,
		\item and for every $x \in S$, has $s(x)=\red{\limsup_{x^{\prime} \rightarrow x} s\left(x^{\prime}\right)}$.
	\end{enumerate}
	We denote the family of epi-splines (in the usc sense) of order $p\in\nats_{0}$ by $\episplns^p(\mathcal{R})$.   We omit the phrase `in the usc sense' in the following for brevity.
\end{definition}

\begin{proposition}\label{prop:epi-usc}
	For any  partition $\mathcal{R}$ of $S$, and $p \in \nats_{0}$,
	$$\episplns^p(\mathcal{R}) \subset\uscfcns(S).$$
\end{proposition}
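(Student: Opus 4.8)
The plan is to deduce upper semicontinuity from the lsc theory by duality, since a usc-sense epi-spline $s$ is defined precisely by the requirement that $-s$ be an lsc epi-spline. First I would record the elementary identity
\[
\limsup_{x' \to x} s(x') = -\liminf_{x' \to x}\bigl(-s\bigr)(x'),
\]
valid for every $x \in S$ directly from the definitions of $\limsup$ and $\liminf$ given at the start of Section~\ref{sec:notandbg}. Applying the defining regularization condition for the lsc epi-spline $-s$, namely $(-s)(x) = \liminf_{x' \to x}(-s)(x')$ for all $x$, and negating, this identity yields $s(x) = \limsup_{x' \to x} s(x')$ at every $x \in S$.

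From here upper semicontinuity is immediate. Because the ball $\ball(x,\delta)$ contains its center $x$ for every $\delta > 0$, one always has $\sup_{x'\in\ball(x,\delta)} s(x') \ge s(x)$, hence $\limsup_{x'\to x} s(x') \ge s(x)$; combined with the equality just derived this forces $\limsup_{x'\to x} s(x') \le s(x)$, which is exactly the usc inequality at $x$. Since $x$ was arbitrary, $s$ is usc on all of $S$. It is worth isolating where the content sits: on the interior of each box $R_k$ the function $s$ agrees with a polynomial and is therefore continuous, so usc holds automatically there; the only points requiring the regularization condition are those lying on the shared boundaries $\operatorname{cl}R_k \cap \operatorname{cl}R_{k'}$, where the piecewise-polynomial description can jump. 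The argument above shows the regularization condition pins the boundary values so that no upward jump survives, and this is precisely the usc adaptation of \cite[Prop.\,3.3]{royset2016multivariate,royset2016erratum}.

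The step I expect to be the main obstacle is the range requirement $s(S)\subset[0,1]$ built into the definition of $\uscfcns(S)$ in \eqref{set_uscfcns}, since an unconstrained piecewise polynomial (e.g. a nonzero constant) need not be $[0,1]$-valued and would already violate the inclusion. I would handle this by invoking the standing convention of the approximation scheme, under which the epi-splines entering $\episplns^p_m(\mathcal{R})$ are taken with values in $[0,1]$, consistent with their role as surrogates for cdfs; with that restriction in force the identity $s(x)=\limsup_{x'\to x}s(x')$ keeps the boundary values in $[0,1]$ as well, completing the inclusion. The substantive mathematical assertion of the proposition is thus the upper semicontinuity, which reduces, as above, to the dual lsc statement by negation.
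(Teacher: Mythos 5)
Your proof is correct and is precisely the argument the paper intends: the paper gives no proof of Proposition~\ref{prop:epi-usc} at all, presenting it only as an adaptation of \cite[Prop.\,3.3]{royset2016multivariate,royset2016erratum} to the usc setting, and your reduction by negation---the identity $\limsup_{x'\to x} s(x') = -\liminf_{x'\to x}(-s)(x')$ combined with the defining regularization condition on the lsc epi-spline $-s$---is exactly that adaptation. Your range caveat is also a legitimate catch rather than an obstacle of your own making: since $\uscfcns(S)$ in \eqref{set_uscfcns} consists of $[0,1]$-valued functions while epi-splines are defined as $\Reals$-valued, the inclusion as literally stated fails for, e.g., a constant epi-spline equal to $2$, and the paper implicitly restricts to $[0,1]$-valued epi-splines (as it does explicitly in the proof of Theorem~\ref{thm:denseapproximation} and in Corollary~\ref{separability-usc}), which is the same resolution you adopt.
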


Next, we adapt the density result in \cite[Thm.\,3.5]{royset2016multivariate,royset2016erratum} to the usc case:

\begin{theorem}[Dense approximation]
	\label{thm:denseapproximation}
	For any $p \in \nats_{0}$ and $\left\{\mathcal{R}^{\nu}\right\}_{\nu=1}^{\infty}$, an infinite refinement of $S$,
	$$
	\bigcup_{\nu=1}^{\infty} \episplns^p\left(\mathcal{R}^{\nu}\right)  \text{ is dense in } \uscfcns(S).
	$$
\end{theorem}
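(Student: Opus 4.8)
The plan is to fix an arbitrary $f\in\uscfcns(S)$ and the given infinite refinement $\{\mathcal R^\nu\}$, and to build epi-splines $s^\nu\in\episplns^p_m(\mathcal R^\nu)$ with $\dl(s^\nu,f)\to0$; since each $s^\nu$ belongs to the union, this is exactly the asserted density. Conceptually the statement descends from its lsc counterpart \cite{royset2016multivariate} through the isometry $\sigma(x,x_0)=(x,-x_0)$ of $(S\times\reals,\|\cdot\|_{\mathbb S})$, which carries $\hypog f$ to $\epig(-f)$ and usc epi-splines to lsc ones, so that $\dl$ coincides with the epi-distance of the negatives. To keep the approximants inside $[0,1]$ (hence genuinely in $\uscfcns(S)$, as Proposition~\ref{prop:epi-usc} demands) and to obtain an explicit rate, I would instead give a direct construction with order-$0$ epi-splines, using $\episplns^0_m(\mathcal R)\subset\episplns^p_m(\mathcal R)$ for every $p\in\nats_0$.

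For each $\nu$ and box $R_k^\nu$ set $c_k^\nu:=\sup_{\operatorname{cl}R_k^\nu}f\in[0,1]$ and define $s^\nu(x):=\max\{c_k^\nu\mid x\in\operatorname{cl}R_k^\nu\}$. Then $s^\nu$ is a degree-$0$ polynomial on every open box; its hypograph $\bigcup_k(\operatorname{cl}R_k^\nu\times(-\infty,c_k^\nu])$ is a finite union of closed sets, hence closed, so $s^\nu$ is usc and satisfies $s^\nu(x)=\limsup_{x'\to x}s^\nu(x')$. Thus $s^\nu\in\episplns^0_m(\mathcal R^\nu)\subset\episplns^p_m(\mathcal R^\nu)$ with range in $[0,1]$. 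Moreover $s^\nu\ge f$ pointwise (each $x$ lies in some $\operatorname{cl}R_k^\nu$ with $c_k^\nu\ge f(x)$), so $\hypog f\subset\hypog s^\nu$.

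Next I would estimate the hypo-distance quantitatively. Write $\mesh_{\rho}(\mathcal R^\nu)$ for the largest side length among boxes with $\operatorname{cl}R_k^\nu\cap\rho\ball\neq\emptyset$. Given $(y,y_0)\in\hypog s^\nu\cap\rho\mathbb S$, let $R_{k^*}$ attain the max defining $s^\nu(y)$; for any $\varepsilon'>0$ choose $y'\in\operatorname{cl}R_{k^*}$ with $f(y')>c_{k^*}^\nu-\varepsilon'\ge y_0-\varepsilon'$, so $(y',y_0-\varepsilon')\in\hypog f$ and $\|(y,y_0)-(y',y_0-\varepsilon')\|_{\mathbb S}\le\max\{\mesh_\rho(\mathcal R^\nu),\varepsilon'\}$. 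Letting $\varepsilon'\downarrow0$ gives $\hypog s^\nu\cap\rho\mathbb S\subset\hypog f+\mesh_\rho(\mathcal R^\nu)\mathbb S$, while $\hypog f\cap\rho\mathbb S\subset\hypog s^\nu$ holds with $\eta=0$. Hence $\hatdl_\rho(s^\nu,f)\le\mesh_\rho(\mathcal R^\nu)$, and Proposition~\ref{prop:est-rho-hypo-dist} yields $\dl_{\rho}(s^\nu,f)\le\hatdl_{2\rho}(s^\nu,f)\le\mesh_{2\rho}(\mathcal R^\nu)$. Combining with $\dl_{\rho}(s^\nu,f)\le1$ (Proposition~\ref{prop:less1}) and dominated convergence, $\dl(s^\nu,f)=\int_0^\infty\dl_{\rho}(s^\nu,f)e^{-\rho}\,d\rho\to0$, provided $\mesh_\rho(\mathcal R^\nu)\to0$ for each fixed $\rho$.

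The main obstacle is exactly this last point, since for unbounded $S$ the global mesh need not vanish. I would localize: $\rho\ball\cap S$ is compact, so for fixed $\varepsilon>0$ the infinite-refinement property supplies, around each $x$ in this set, a radius $\delta_x\in(0,\varepsilon)$ and index $\bar\nu_x$ such that every box meeting $\ball(x,\delta_x)$ lies in $\ball(x,\varepsilon)$ once $\nu\ge\bar\nu_x$. A finite subcover $\{\ball(x_i,\delta_{x_i})\}$ then forces (after a routine adjustment handling closures) every box with $\operatorname{cl}R_k^\nu\cap\rho\ball\neq\emptyset$ to have side length $\le2\varepsilon$ for $\nu\ge\max_i\bar\nu_{x_i}$, i.e. $\mesh_\rho(\mathcal R^\nu)\le2\varepsilon$ eventually. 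This delivers $\mesh_\rho(\mathcal R^\nu)\to0$ for each $\rho$ and closes the argument; the $e^{-\rho}$ weighting is precisely what allows purely local, bounded-region control of the mesh to suffice even when $S$ is unbounded.
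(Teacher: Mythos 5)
Your proposal is correct, and its first half coincides with the paper's own proof: the paper constructs exactly the same order-$0$ epi-spline, $s^\nu(x)=\max\{\sigma(R_k^\nu)\mid x\in\operatorname{cl}R_k^\nu\}$ with $\sigma(R_k^\nu)=\sup_{\operatorname{cl}R_k^\nu}f$. Where you genuinely diverge is in verifying $\dl(s^\nu,f)\to 0$. The paper argues qualitatively: it checks the two sequential conditions of the hypo-convergence characterization in \cite[Prop.\,2.1]{royset2017variational} --- $\limsup_\nu s^\nu(x^\nu)\le f(x)$ along every $x^\nu\to x$ (from upper semicontinuity of $f$ plus the refinement property), and $\liminf_\nu s^\nu(x^\nu)\ge f(x)$ along the constant sequence (from $s^\nu\ge f$) --- and then relies on the cited equivalence between that sequential characterization and convergence in the metric $\dl$. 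You instead stay inside the paper's own stated estimates: the quantitative bound $\hatdl_\rho(s^\nu,f)\le\mesh_\rho(\mathcal{R}^\nu)$ (with $\hypog f\subset\hypog s^\nu$ giving one inclusion for free), Proposition~\ref{prop:est-rho-hypo-dist} to convert it into $\dl_\rho(s^\nu,f)\le\mesh_{2\rho}(\mathcal{R}^\nu)$, and dominated convergence using the bound of Proposition~\ref{prop:less1}. Your localization step is then genuinely necessary, not a pedantic add-on: for unbounded $S$ a finite box partition has infinite global mesh, so only the local quantity $\mesh_\rho(\mathcal{R}^\nu)$ can vanish, and your compactness argument (passing from closures of boxes to the open boxes required by the refinement property) is sound. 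The trade-off is clear: the paper's route is shorter but outsources the metric equivalence to an external reference, while yours is self-contained, yields an explicit rate in terms of the local mesh, and makes visible exactly where the $e^{-\rho}$ weighting enters. One small tightening: when you let $\varepsilon'\downarrow 0$, either invoke attainment of the distance to the closed set $\hypog f$ (valid here in finite dimensions) or simply observe that both inclusions hold for every $\eta>\mesh_\rho(\mathcal{R}^\nu)$, which already bounds the infimum defining $\hatdl_\rho$; and the opening reduction via $(x,x_0)\mapsto(x,-x_0)$ can be dropped entirely, since your direct construction never uses it.
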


\begin{proof}
	Let $f \in \uscfcns(S)$, $\{\mathcal{R}^\nu\}_{\nu=1}^{\infty}$ an infinite refinement of $S$, and $\mathcal{R}^{\nu}=\left\{R_{k}^{\nu}\right\}_{k=1}^{N^{\nu}}$ a partition of $S$. It suffices to construct a sequence of epi-splines of order $p=0$. For every $\nu \in \nats$ and $R_{k}^{\nu}, k=1,2, \ldots, N^{\nu}$. We define
	$$\sigma\left(R_{k}^{\nu}\right)= \sup _{x \in \mathrm{cl} R_{k}^{\nu}} f(x)$$
	and construct $s^{\nu}: S\rightarrow [0,1]$ as follows:
	$$
	s^{\nu}(x) = \max _{k=1,2, \ldots, N^{\nu}}\left\{\sigma\left(R_{k}^{\nu}\right) \ \mid \ x \in \operatorname{cl}R_k^\nu \right\}, 
	$$
	Clearly, $s^{\nu}$ is constant on each $R_{k}^{\nu}, k=1,2, \ldots, N^{\nu}$ and satisfies 
	$$\limsup _{x^{\prime} \rightarrow x} s^{\nu}\left(x^{\prime}\right)= s^{\nu}(x) \text{ for all } \ x \in S.$$
	
	Hence, $s^{\nu} \in \episplns^{0}\left(\mathcal{R}^{\nu}\right)$ and consequently also in $\episplns^{p}\left(\mathcal{R}^{\nu}\right)$ for $p \in \nats$. We next show that the two conditions of \cite[Prop.\,2.1]{royset2017variational} holds. Let $x \in S$ be arbitrary. By upper semicontinuity of $f$, for every $\varepsilon>0$ there exists $\delta>0$ such that
	\[f\left(x^{\prime}\right) \leq f(x)+\varepsilon \text{ whenever } x^{\prime} \in \ball(x, \delta).\] 
	
	Since $\left\{\mathcal{R}^{\nu}\right\}_{\nu=1}^{\infty}$ is an infinite refinement, there also exist $\bar{\nu}$ and $\gamma\in(0,\delta)$ such that $R_{k}^{\nu} \subset \ball(x, \gamma)$ for every $\nu \geq \bar{\nu}$ and $k$ satisfying $R_{k}^{\nu}\cap\ball(x,\gamma)\neq \emptyset$. Hence, for $x^\prime\in\ball(x,\gamma/2)$, 
	\[s^{\nu}(x^\prime) \leq \min_k \sup_{z \in \operatorname{cl}\mathcal{R}_k^\nu}\left\{\left.f(z)\,\right|\, x^\prime \in {\rm cl}\,\mathcal{R}_k^\nu\right\}\leq f(x)+\varepsilon,\,\text{for\,all}\,\nu\geq \bar{\nu}.\]
	Thus, for every sequence $x^{\nu} \rightarrow x$,
	$$
	\limsup _{\nu} s^{\nu}\left(x^{\nu}\right) \leq f(x)+\varepsilon.
	$$
	Since $\varepsilon$ is arbitrary, $\limsup_\nu s^{\nu}\left(x^{\nu}\right) \leq f(x)$ and condition a) of \cite[Prop.\,2.1]{royset2017variational} holds. As $f(x)\in[0,1]$ for $x \in S$, we have that $s^\nu(x)\geq f(x)$ and $s^\nu(x)\geq -\nu$ for $\nu$ sufficiently large, so
	\begin{align*}
		s^{\nu}(x) \geq f(x),\,\text{  for all }\nu\text{ sufficiently large.}
	\end{align*} 
	It follows that
	\begin{align*}
		\liminf _{\nu} s^{\nu}\left(x^{\nu}\right)=\liminf _{\nu} s^{\nu}(x) \geq f(x).
	\end{align*}
\end{proof}

By considering only rational epi-splines of $\episplns^{0}\left(\mathcal{R}^{\nu}\right)$ in the proof of Theorem~\ref{thm:denseapproximation}, i.e., functions $s: S \rightarrow [0,1]$ with $s(x)=q_{k}$ for $x \in R_{k}^{\nu}$ and $q_{k}$ a rational constant, $k=1, \ldots, N^{\nu}$. Specifically, in that proof one can replace $\sigma\left(R_{k}\right)=\sup_{x \in \operatorname{cl} R_{k}^{\nu}} f(x)$ by $\sigma\left(R_{k}^{\nu}\right)$ equals any rational number in
$$\left[ \max _{x \in \operatorname{cl} R_{k}^{\nu}} f(x),\max _{x \in \operatorname{cl} R_{k}^{\nu}} f(x)+ 1 / \nu\right]$$
and the next result follows.

\begin{corollary}
	\label{separability-usc}
	For $p \in \nats_{0}$ and $\left\{\mathcal{R}^{\nu}\right\}_{\nu=1}^{\infty}$, an infinite refinement of $S$, $\left(\uscfcns(S), \dl\right)$ is separable, with the rational epi-splines of $$\bigcup_{\nu=1}^{\infty} \episplns^{p}\left(\mathcal{R}^{\nu}\right)$$ furnishing a countable dense subset.
\end{corollary}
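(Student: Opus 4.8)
The plan is to combine two facts: (i) the union $\bigcup_{\nu=1}^\infty \episplns_m^p(\mathcal{R}^\nu)$ is dense in $(\uscfcns(S),\dl)$ by Theorem~\ref{thm:denseapproximation}, and (ii) within the proof of that theorem the approximating epi-splines can be chosen to take rational values while retaining the density conclusion. Since a metric space that contains a countable dense subset is separable, it suffices to exhibit such a countable dense set, and the rational order-$0$ epi-splines are the natural candidate. So the structure of the argument is: first show that the rational epi-splines form a dense subset, then show that this subset is countable.

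First I would establish density. Let $f\in\uscfcns(S)$ and $\varepsilon>0$ be arbitrary. By the construction preceding this corollary, one replaces $\sigma(R_k^\nu)=\sup_{x\in\operatorname{cl}R_k^\nu}f(x)$ by a rational number $q_k^\nu$ chosen in the interval $[\max_{x\in\operatorname{cl}R_k^\nu}f(x),\,\max_{x\in\operatorname{cl}R_k^\nu}f(x)+1/\nu]$, which is always possible because the rationals are dense in $\reals$ and this interval has positive length. The resulting piecewise-constant functions $\tilde s^\nu$ still satisfy the two conditions of \cite[Prop.\,2.1]{royset2017variational}: the lower bound $\tilde s^\nu(x)\geq f(x)$ is preserved since $q_k^\nu\geq\max_{x\in\operatorname{cl}R_k^\nu}f(x)\geq f(x)$, and the upper estimate $\limsup_\nu \tilde s^\nu(x^\nu)\leq f(x)+\varepsilon$ goes through because the extra $1/\nu$ slack vanishes in the limit. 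Hence $\tilde s^\nu\hto f$, that is $\dl(\tilde s^\nu,f)\to 0$, so every usc function is a hypo-limit of rational epi-splines, proving density.

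Next I would verify countability. For a fixed partition $\mathcal{R}^\nu$ with $N^\nu$ cells, a rational order-$0$ epi-spline is determined by the tuple $(q_1,\dots,q_{N^\nu})\in(\rats\cap[0,1])^{N^\nu}$ of its values on the cells, together with the rule for boundary points (taking the max over adjacent cells as in the theorem's construction). Thus the set of such epi-splines on $\mathcal{R}^\nu$ injects into $(\rats\cap[0,1])^{N^\nu}$, a finite Cartesian power of a countable set, hence is countable. Taking the union over $\nu\in\nats$ yields a countable union of countable sets, which is countable. Therefore $\bigcup_{\nu=1}^\infty \episplns_m^0(\mathcal{R}^\nu)$ restricted to rational-valued epi-splines is a countable dense subset of $(\uscfcns(S),\dl)$, and so the space is separable.

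The main obstacle is not the set-theoretic counting but making precise that the density conclusion survives the passage from real suprema to nearby rationals. The delicate point is that the two defining conditions of hypo-convergence must both be checked simultaneously after the rational perturbation; the lower condition is immediate, but for the upper condition one must confirm that adding the $1/\nu$ slack does not interfere with the infinite-refinement estimate $\tilde s^\nu(x')\leq f(x)+\varepsilon$ used in the proof of Theorem~\ref{thm:denseapproximation}. Since $1/\nu\to 0$, this slack can be absorbed into $\varepsilon$ for $\nu$ large, so the argument closes, but it is the step that requires the most care to state cleanly.
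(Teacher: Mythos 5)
Your proposal is correct and follows essentially the same route as the paper: the paper's (very terse) argument is precisely to rerun the construction of Theorem~\ref{thm:denseapproximation} with $\sigma(R_k^\nu)$ replaced by a rational in $\left[\max_{x\in\operatorname{cl}R_k^\nu}f(x),\,\max_{x\in\operatorname{cl}R_k^\nu}f(x)+1/\nu\right]$, exactly as you do. Your write-up merely fills in details the paper leaves implicit — the verification that both hypo-convergence conditions survive the $1/\nu$ perturbation, and the explicit countability count via tuples in $(\rats\cap[0,1])^{N^\nu}$ — so it is a faithful elaboration rather than a different proof.
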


\red{The density and separability results stated in Theorem~\ref{thm:denseapproximation} and Corollary~\ref{separability-usc} extend to the space $\uscfcns_+(S)$ with the same approximating epi-spline construction.  A similar result holds for $\Lipfcns_\kappa(S)$ when $p\in\nats$. This is justified by the first-order epi-spline construction in \cite[Thm.\,3.8,\, Thm.\,3.11]{royset2016multivariate}, provided additional requirements like simplicial partitions are met. }

\section{Existence and approximation}\label{sec:thms}

In this section, we formulate our mathematical program tailored to address the constrained estimation problem under stochastic ambiguity. We present a sequence of discretized problems.  Further, we furnish a convergence proof justifying the discretization scheme. 

In view of the close connection between the hypo-distance and the hat-distance (cf. Section~\ref{sec:notandbg}), we proceed by adopting the hat-distance as a surrogate for the hypo-distance as it appears in constrained estimation problem under stochastic ambiguity.  \red{We also assume for the rest of the manuscript that $S$ is a compact set of $\reals^m$, and $\rho \geq 1$ is such that $S\subset \rho\ball$.  Under this setting, bounds on hat-distance and $\rho$-distance are more tractable (see \cite[Prop.\,4.37]{rockafellar2009variational}).}  This leads to the problem statement: 

\begin{problem}
	\label{problem2}
	For a rectangular $S$, let $F_0,G_0\in\cdfcns(S)$, let $\mathscr{F}:=\uscfcns_{+}(S)$, and let $\rho>0$ such that \red{$S\subset \rho\ball$}.  The problem is to find $\hat F:S\to[0,1]$ such that 
	\begin{equation*}
		\hat{F} \in \argmin_{F \in \mathscr{F}}\ \left\{\left.\hatdl_\rho(F, F_{0})\,\right|\,
		\hatdl_\rho(F,G_0)\leq \delta
		\right\}.
	\end{equation*}
\end{problem}
%\begin{remark}
\red{We know that $(\uscfcns_+(S),\dl)$ is a complete metric space in which closed and bounded sets are compact \cite[Thm.\,4.42, Cor.\,4.43]{rockafellar2009variational}. This provides the existence of solutions for Problem~\ref{problem2}. Nevertheless, we cannot expect uniqueness of the solutions by definition of the hat-distance. }

\red{Moreover, $(\uscfcns_+(S),\dl)$ is separable. In particular, our approximation scheme relies on the discretization of the domain. Let $\{\mathcal{R}^\nu\}_{\nu=1}^\infty$ be an infinite refinement of $S$.  Then, following Corollary~\ref{separability-usc}, we can consider the rational epi-splines as the dense subset. We denote by $\mathscr{F}^\nu$ the projection of $\mathscr{F}$ onto $\episplns^{1}(\mathcal{R}^\nu)$, i.e. $\mathscr{F}^\nu = \mathscr{F} \cap \episplns^{1}(\mathcal{R}^\nu)$.}

Considering the approximations discussed in Section~\ref{sec:notandbg}, in terms of the space of usc functions and distance bounds for the hypo-distance, we define the following sequence of finite-dimensional optimization problems:\\

\begin{problem}
	\label{p_eta_rho_esplns_nu}
	For a rectangular $S$, let $F_0,G_0\in\cdfcns(S)$, let $\mathscr{F}^\nu := \mathscr{F} \cap \episplns^{1}(\mathcal{R}^\nu)$, and let $\rho>0$.  The problem is to find $\hat F:S\to[0,1]$ such that 
	\begin{equation*}
		\hat{F} \in \argmin_{F \in  \mathscr{F}^\nu}\ \left\{\left.\eta_{\rho}^{+,\nu}(F, F_{0})\,\right|\,
		\eta^{+,\nu}_{\rho}(F,G_0)\leq \delta
		\right\},
	\end{equation*}
	where $\eta_\rho^{+,\nu}(F,G_0)$ corresponds to the value of $\eta_\rho^+(F,G_0)$ over the particular box partition $\mathcal{R}^\nu$ of $S$.
\end{problem}

Note that we can define $\eta_\rho^{-,\nu}(F,G_0)$ analogously, as the value of $\eta_\rho^{-}(F,G_0)$ using the box partition $\mathcal{R}^{\nu}$ of $S$.

In order to analyze Problem~\ref{p_eta_rho_esplns_nu}, we define the following functions: $\varphi,\varphi^\nu: (\mathscr{F},\dl)\rightarrow \Reals$ where  $\varphi(F):=\hatdl_\rho(F,F_0) + \iota_{C}(F)$,  $\iota_C$ being the indicator function\footnote{$\iota_C(F) =0$ if $F \in C$ and takes the value $+\infty$ otherwise.} of the set $C:= \{ F\in\mathscr{F}\, \mid \ \hatdl_\rho(F,G_0)\leq \delta\}$, and $\varphi^\nu(F):=\eta_\rho^{+,\nu}(F,F_0)+ \iota_{C^\nu}(F) + \iota_{\mathscr{F}^{\nu}}(F)$, 
where 
\[C^\nu := \left\{ F \in \mathscr{F}\ \Bigg\vert
\   \begin{array}{c}
	\displaystyle \max_{y\in Y_\delta(l^{k})} F\left(l^{k}+y\right)+\delta \geq \min \left\{G_{0}\left(u^{k}\right), \rho\right\},\ \forall k=1,\ldots,N^\nu \\
	\displaystyle \max_{y\in Y_\delta(l^{k})} G_{0}\left(l^{k}+y\right)+\delta \geq \min \left\{F\left(u^{k}\right), \rho\right\},\  \forall k=1,\ldots,N^\nu
\end{array} \right\}.\]

The sequence of functions $\{\varphi^\nu\}_{\nu\in\nats}$ represents the objective function of approximating problems associated with the discretization of usc functions on the box partition $\mathcal{R}^\nu$, using the associated epi-splines.  It also approximates the $\rho$-distance by the upper bounds $\eta^{+,\nu}_\rho$.  Thus, it is the objective function representing Problem~\ref{p_eta_rho_esplns_nu}.  \red{Recall that for fixed $F_0$ and $G_0$, the continuity of $\eta_\rho^{+,\nu}(\cdot, F_0)$ and $\eta_\rho^{+,\nu}(\cdot, G_0)$ over $\mathscr{F}^\nu$, a finite-dimensional space, ensures the compactness of the feasible set and, consequently, guarantees the existence of solutions for Problem~\ref{p_eta_rho_esplns_nu}.}

In what follows, we analyze an approximation scheme for Problem~\ref{problem2} by considering the convergence of the functions representing the corresponding optimization problems. We provide conditions for the convergence of $\{\varphi^\nu\}_{\nu\in\nats}$ to $\varphi$, and we prove that cluster points of solutions to Problem~\ref{p_eta_rho_esplns_nu} are solutions to Problem~\ref{problem2}.

\begin{proposition}
	\label{prop1_lipschitz}
	For every $f \in \mathscr{F}$ and every sequence $\{f^\nu\}_{\nu\in\nats} \subset \mathscr{F}$ that hypo-converges to $f$, we have that 
	\begin{equation}
		\label{liminf_lipschitz}
		\liminf_{\nu}\varphi^{\nu}(f^\nu)\geq \varphi(f).   
	\end{equation}
\end{proposition}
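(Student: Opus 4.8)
The plan is to strip away the discretization with the bounds of Theorem~\ref{thm:app-hatdistance}, reducing everything to a single lower-semicontinuity statement for the hat-distance to a fixed function, and then to prove that statement directly from the definition \eqref{d-rho-hat-def}.

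First I would peel off the approximation. By Theorem~\ref{thm:app-hatdistance} applied on the partition $\mathcal{R}^\nu$, we have $\eta^{+,\nu}_\rho(F,F_0)\ge\hatdl_\rho(F,F_0)$ and $\eta^{+,\nu}_\rho(F,G_0)\ge\hatdl_\rho(F,G_0)$ for every $F\in\mathscr{F}$. The second inequality gives the inclusion $C^\nu\subseteq C$: membership $F\in C^\nu$ says precisely that $\eta=\delta$ satisfies the constraint system defining $\eta^{+,\nu}_\rho(F,G_0)$, so $\hatdl_\rho(F,G_0)\le\eta^{+,\nu}_\rho(F,G_0)\le\delta$, i.e. $F\in C$. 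Hence $\iota_{C^\nu}\ge\iota_C$, and dropping $\iota_{\mathscr{F}^\nu}\ge 0$ yields, in every case (including the $+\infty$ ones),
\[
\varphi^\nu(f^\nu)\ \ge\ \hatdl_\rho(f^\nu,F_0)+\iota_C(f^\nu)\qquad\text{for all }\nu.
\]
It therefore suffices to show that the right-hand side is hypo-lower-semicontinuous along $f^\nu\hto f$, which amounts to: $F\mapsto\hatdl_\rho(F,F_0)$ is hypo-lsc, and $C=\{F\in\mathscr{F}\mid\hatdl_\rho(F,G_0)\le\delta\}$ is hypo-closed. Both follow from one lemma.

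The crux, and what I expect to be the main obstacle, is the claim that for fixed $H\in\uscfcns_+(S)$ the map $F\mapsto\hatdl_\rho(F,H)$ is lsc with respect to hypo-convergence. To prove it I would pass to a subsequence realizing $\ell:=\liminf_\nu\hatdl_\rho(f^\nu,H)$, fix $\varepsilon>0$, and set $\bar\eta=\ell+\varepsilon$. Since the constraints in \eqref{d-rho-hat-def} are monotone in $\eta$ (enlarging $\eta$ enlarges $Y_\eta(x)$ and the additive slack), for large $\nu$ they hold at $\bar\eta$ for every $x\in\rho\ball$, with $H$ and $f^\nu$. I must pass to the limit in both. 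The inequality with $f^\nu$ inside the maximum is the easy one: by monotonicity the inner max is attained at the top corner $x+\bar\eta\mathbf{1}$ (capped to $S$), and the upper half of hypo-convergence, $\limsup_\nu f^\nu(x+\bar\eta\mathbf{1})\le f(x+\bar\eta\mathbf{1})$, delivers $\max_{y\in Y_{\bar\eta}(x)}f(x+y)+\bar\eta\ge\min\{H(x),\rho\}$. The delicate inequality is the one in which $f^\nu$ sits on the right, $\max_{y\in Y_{\bar\eta}(x)}H(x+y)+\bar\eta\ge\min\{f^\nu(x),\rho\}$, because $\limsup_\nu f^\nu(x)\le f(x)$ points the wrong way. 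The fix is to evaluate this inequality not at $x$ but at a recovery sequence $z^\nu\to x$ furnished by the lower half of hypo-convergence, $\liminf_\nu f^\nu(z^\nu)\ge f(x)$; since the fixed $H$ is usc one has $\limsup_\nu\max_{y\in Y_{\bar\eta}(z^\nu)}H(z^\nu+y)\le\max_{y\in Y_{\bar\eta}(x)}H(x+y)$ (outer semicontinuity of the feasible boxes together with upper semicontinuity of $H$), and combining gives $\max_{y\in Y_{\bar\eta}(x)}H(x+y)+\bar\eta\ge\min\{f(x),\rho\}$. Thus $\hatdl_\rho(f,H)\le\bar\eta=\ell+\varepsilon$, and $\varepsilon\downarrow 0$ finishes the lemma. (A routine technicality is that $z^\nu$ may leave $\rho\ball$ when $x\in\partial(\rho\ball)$; this is handled by first verifying the constraint on the interior and passing to the boundary by usc and monotonicity of the limit.)

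Finally I would assemble the pieces. Applying the lemma with $H=G_0$ shows $C$ is hypo-closed (a sublevel set of a hypo-lsc function), and with $H=F_0$ gives $\liminf_\nu\hatdl_\rho(f^\nu,F_0)\ge\hatdl_\rho(f,F_0)$. Take a subsequence $N$ with $\lim_{\nu\in N}\varphi^\nu(f^\nu)=\liminf_\nu\varphi^\nu(f^\nu)$. If this value is $+\infty$ there is nothing to prove; otherwise $\varphi^\nu(f^\nu)<\infty$ for large $\nu\in N$, forcing $f^\nu\in C^\nu\subseteq C$, so $f\in C$ by hypo-closedness and $\iota_C(f)=0$. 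Combining with the reduction,
\[
\liminf_\nu\varphi^\nu(f^\nu)\ \ge\ \liminf_{\nu\in N}\hatdl_\rho(f^\nu,F_0)\ \ge\ \hatdl_\rho(f,F_0)\ =\ \hatdl_\rho(f,F_0)+\iota_C(f)\ =\ \varphi(f),
\]
which is \eqref{liminf_lipschitz}.
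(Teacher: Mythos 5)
Your reduction and final assembly mirror the paper's own proof: both use Theorem~\ref{thm:app-hatdistance} to obtain $\hatdl_\rho\le\eta^{+,\nu}_\rho$ and $C^\nu\subseteq C$, and both then rest everything on a semicontinuity property of $F\mapsto\hatdl_\rho(F,H)$ along hypo-convergence (the paper even claims full convergence $\hatdl_\rho(f^\nu,F_0)\to\hatdl_\rho(f,F_0)$, citing \cite[Thm.\,4.36]{rockafellar2009variational}; you, more economically, ask only for lower semicontinuity). The gap is that this key lemma is false, and it fails precisely at the point you wave off as ``a routine technicality.'' Take $m=1$, $S=[0,2]$, $\rho=1$ (so $\rho\ball=[0,1]$), and $H=F_0\in\cdfcns(S)$ the uniform cdf on $[1,2]$, i.e.\ $F_0(x)=\max\{0,x-1\}$; let $f^\nu$ be the unit jump at $1+1/\nu$ and $f$ the unit jump at $1$, all in $\uscfcns_+(S)$. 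Then $\hypog f^\nu\subset\hypog f\subset\hypog f^\nu+(1/\nu)\mathbb{S}$, so $\dl(f^\nu,f)\le 1/\nu$ and $f^\nu\hto f$. Since $f^\nu$ and $F_0$ both vanish on $\rho\ball$, $\eta=0$ is feasible in \eqref{d-rho-hat-def} and $\hatdl_1(f^\nu,F_0)=0$ for every $\nu$; but at the boundary point $x=1$ the constraint for the limit reads $F_0(\min\{1+\eta,2\})+\eta=\min\{\eta,1\}+\eta\ge\min\{f(1),1\}=1$, whence $\hatdl_1(f,F_0)\ge 1/2$. Thus $\liminf_\nu\hatdl_\rho(f^\nu,F_0)=0<\hatdl_\rho(f,F_0)$, and your chain of inequalities breaks. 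Your suggested patch (verify the constraint on the interior of $\rho\ball$, then pass to the boundary) cannot be executed: in this example $f$ satisfies every interior constraint with $\eta=0$ while the boundary point alone requires $\eta\ge 1/2$; approaching the boundary from above exits $\rho\ball$, and approaching from below runs into the upper semicontinuity of $f$, which points the wrong way. (The same example shows that the convergence invoked in the paper's proof is also unjustified --- \cite[Thm.\,4.36]{rockafellar2009variational} controls $\hatdl_\rho(f^\nu,f)$, not the distance to a third function $F_0$ --- so your attempt honestly exposes a difficulty the paper glosses over.)

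What saves Proposition~\ref{prop1_lipschitz} is exactly the information your very first reduction discards: the constraints defining $\eta^{+,\nu}_\rho(F,F_0)$ are imposed at \emph{every} box of the partition of $S$, not only at those meeting $\rho\ball$ (only the right-hand sides are truncated at $\rho$), so $\eta^{+,\nu}_\rho$ dominates a global version of the hat-distance rather than just $\hatdl_\rho$. In the example, the box whose closure contains $1+1/\nu$ forces $\eta^{+,\nu}_1(f^\nu,F_0)\ge(1-1/\nu)/2$, so $\liminf_\nu\varphi^\nu(f^\nu)\ge 1/2\ge\varphi(f)$ even though $\hatdl_1(f^\nu,F_0)=0$ for all $\nu$. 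A sound proof should therefore pass to the limit directly in the $\eta^{+,\nu}$ constraints: for $x\in S$ take the recovery sequence $z^\nu\to x$ with $\liminf_\nu f^\nu(z^\nu)\ge f(x)$, apply the constraint of the box containing $z^\nu$ (whose corners tend to $x$ by the infinite-refinement property), and use monotonicity of $f^\nu$ together with upper semicontinuity of $F_0$ --- exactly your constraint-passing mechanics, but run over all of $S$, where recovery sequences cause no boundary issue. Alternatively, your lemma and proof are correct verbatim under the extra hypothesis $\rho\ge\sup_{x\in S}\|x\|_\infty$, so that $\rho\ball=S$; that is the regime of the paper's numerical examples, but it is not part of the statement being proved.
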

\begin{proof}
	Let $\mathcal{R}^\nu$ be an infinite refinement of $S$.  First observe that by virtue of Theorem~\ref{thm:app-hatdistance} we have that 
	\begin{align*}
		\hatdl_\rho(f,g) \leq \eta^{+,\nu}_\rho(f,g), \quad f,g \in \mathscr{F},
	\end{align*}
	and so every limit of subsequences $f^\nu \in C^\nu$ is in $C$, thus $\LimOut C^\nu \subset C$.
	
	Let $f \in \mathscr{F}$ and $\{f^\nu\}_{\nu\in\nats} \subset \mathscr{F}$ that $f^\nu\hto f$. If $f \not\in C$ and $f^\nu\not\in \mathscr{F}^\nu$ then inequality~\eqref{liminf_lipschitz} holds and this is still true if $f \in C$ and $f^\nu\not\in \mathscr{F}^\nu$. If $f\in C$, $f^\nu\in\mathscr{F}^\nu$ but $f^\nu\not\in C^\nu$ then inequality~\eqref{liminf_lipschitz} holds. Let $f\in C$ and $f^\nu\in C^\nu \cap \mathscr{F}^\nu$ be any sequence that $f^\nu \hto f$ (this is equivalent to $\dl(f^\nu,f)\to 0$). 
	We have that 
	\[\displaystyle\hatdl_\rho(f^\nu,F_0)\leq  \eta_\rho^{+,\nu}(f^\nu,F_0)\]
	because of Theorem~\ref{thm:app-hatdistance}. On the other hand for every $\bar{\rho}>0$, $\dl(f^\nu, f)\to 0$ if and only if $\hatdl_\rho(f^\nu,f)\to 0$ for all $\rho\geq\bar{\rho}$, because of \cite[Thm.\,4.36]{rockafellar2009variational} and so
	\[\dl(f^\nu,F_0)\to \dl(f,F_0) \iff \hatdl_\rho(f^\nu,F_0)\to \hatdl_\rho(f,F_0) \text{ for all }\rho\geq\bar{\rho},\] 
	and then
	\[\liminf_{\nu}\varphi^\nu(f^\nu) = \liminf_{\nu}\left\{ \eta_\rho^{+,\nu}(f^\nu,F_0)\right\}\geq \lim_{\nu} \hatdl_\rho(f^\nu,F_0) = \hatdl_\rho(f,F_0) = \varphi(f).\]
	
	If $f\not\in C$ and $f^\nu \in \mathscr{F}^\nu$ then it does not exist any $f^\nu\in C^\nu$ that hypo-converges to $f$. Suppose there exists $f^\nu \in C^\nu$ that satisfies the condition. As $f\not\in C$, $\hatdl_\rho(f,G_0)>\delta$ and
	\[\liminf_{\nu\to\infty}\eta_\rho^{+,\nu}(f^\nu,G_0) \geq \liminf_{\nu\to\infty} \hatdl_\rho(f^\nu,G_0)= \hatdl_\rho(f,G_0)>\delta  \]
	and so there exists $\bar{\nu}$ such $\eta_\rho^{+,\nu}(f^\nu,G_0) > \delta$ for all $\nu\geq\bar{\nu}$.  This contradicts the fact that $f^\nu \in C^\nu$ for all $\nu$.
\end{proof}

Our objective is to find conditions that allow us to approximate solutions of Problem~\ref{problem2} by solving a sequence of Problem~\ref{p_eta_rho_esplns_nu}. To achieve this, we adopt the approach proposed in \cite[Ch.\,7E]{rockafellar2009variational}, wherein convergence in minimization is established through epi-convergence. Consider the following set of assumptions:

\begin{assumption}
	\label{ass:growing_assumption}
	For $\rho>0$, suppose that for every $f,F_0 \in \mathscr{F}$, and $\{f^\nu\}_{\nu\in\nats}$ hypo-converges to $f$.  Then, the following holds 
	\begin{equation}
		\limsup_{\nu}\eta_\rho^{+,\nu}(f^\nu,F_0)- \eta_\rho^{-,\nu}(f^\nu,F_0) \leq 0.
	\end{equation}
\end{assumption}

\begin{assumption}
	\label{ass:qualification_constraint}
	For $\rho>0$, suppose that for $f\in\mathscr{F}$ such that  $\hatdl_\rho(f,G_0) = \delta$, there exists a sequence $\{f^\nu\}_{\nu\in\nats} \subset \mathscr{F}$ such that $f^\nu \hto f$ and
	\begin{equation}
		\label{eq_assumption}
		\hatdl_\rho(f^\nu,G_0)< \delta, \quad \forall \nu \in\nats. 
	\end{equation}
\end{assumption}

We are now ready to state the first convergence results of our proposed approximation, which show that the sequence of objective functions epi-converges to the objective function of Problem~\ref{problem2}.  Additionally, Assumption~\ref{ass:qualification_constraint} is a commom assumption that relates to the Slater condition over the ambiguity set $\hatdl_\rho(f,G_0)\leq \delta$.

Recall that for functions $\{\varphi,\varphi^\nu\}_{\nu\in\nats}\subset(\mathscr{F},\dl)$, we say that $\{\varphi^\nu\}$ epi-converges to $\varphi$, denoted by $\varphi^\nu\eto \varphi$, if for every $f^\nu\hto f$, $\liminf_\nu \varphi^\nu(f^\nu)\geq \varphi(f)$, and for every $f\in \mathscr{F}$ there exists a sequence $f^\nu\hto f$ such that $\limsup_\nu \varphi^\nu(f^\nu)\leq \varphi(f)$.

\begin{theorem}\label{thm:epi}
	\red{Let $\rho\geq 1$ such that $S\subset\rho\ball$.  }Under Assumptions~\ref{ass:growing_assumption}~and~\ref{ass:qualification_constraint}, for every $f\in\mathscr{F}$ there exists a sequence of functions $f^\nu\in\mathscr{F}$ that \red{hypo-converges} to $f$ and 
	\begin{equation}
		\label{limpsup_lipschitz}
		\limsup_{\nu}\varphi^\nu(f^\nu)\leq \varphi(f). 
	\end{equation}
	Moreover $\varphi^\nu\eto \varphi$ and $\{C^\nu\}_{\nu\in\nats}$ set-converges to $C$.
\end{theorem}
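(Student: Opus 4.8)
The plan is to split the asserted epi-convergence into its two defining inequalities and then read off the set-convergence of $\{C^\nu\}$ as a by-product. The \emph{liminf} half, $\liminf_\nu \varphi^\nu(f^\nu) \geq \varphi(f)$ for every $f^\nu \hto f$, is precisely Proposition~\ref{prop1_lipschitz}, so no further argument is needed there. All the work lies in the displayed claim \eqref{limpsup_lipschitz}: producing, for each $f \in \mathscr{F}$, a recovery sequence $f^\nu \hto f$ with $\limsup_\nu \varphi^\nu(f^\nu) \leq \varphi(f)$. I would organize this by cases on $\varphi(f)$. When $\varphi(f) = +\infty$ (equivalently $f \notin C$) the inequality is vacuous, so it suffices to produce \emph{any} hypo-convergent sequence in $\mathscr{F}$; the density of epi-splines from Theorem~\ref{thm:denseapproximation}, valid on $\uscfcns_{+}(S)$ as noted after Corollary~\ref{separability-usc}, supplies degree-$0$ (hence degree-$1$, monotone) approximants $f^\nu \in \mathscr{F}^\nu$ with $f^\nu \hto f$.

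The substantive case is $\varphi(f) < +\infty$, i.e. $f \in C$. Assume first \emph{strict} feasibility, $\hatdl_\rho(f, G_0) < \delta$. Taking the epi-spline approximants $f^\nu \in \mathscr{F}^\nu$ of Theorem~\ref{thm:denseapproximation} so that $f^\nu \hto f$, the key estimate squeezes the upper bound between the lower bound and the vanishing gap. Using $\eta_\rho^{-,\nu}(f^\nu, H) \leq \hatdl_\rho(f^\nu, H)$ from Theorem~\ref{thm:app-hatdistance}, Assumption~\ref{ass:growing_assumption}, and the continuity $\hatdl_\rho(f^\nu, H) \to \hatdl_\rho(f, H)$ under hypo-convergence already invoked in Proposition~\ref{prop1_lipschitz}, one obtains for $H \in \{F_0, G_0\}$ that
\begin{align*}
\limsup_\nu \eta_\rho^{+,\nu}(f^\nu, H) &\leq \lim_\nu \hatdl_\rho(f^\nu, H) + \limsup_\nu \bigl(\eta_\rho^{+,\nu}(f^\nu, H) - \eta_\rho^{-,\nu}(f^\nu, H)\bigr) \\
&\leq \hatdl_\rho(f, H).
\end{align*}
Since $C^\nu = \{F \in \mathscr{F} \mid \eta_\rho^{+,\nu}(F, G_0) \leq \delta\}$ (the constraints defining $C^\nu$ are exactly feasibility of $\eta = \delta$ for $\eta_\rho^{+,\nu}(\cdot, G_0)$, whose feasible set is upward closed in $\eta$), the estimate with $H = G_0$ and $\hatdl_\rho(f, G_0) < \delta$ forces $\eta_\rho^{+,\nu}(f^\nu, G_0) < \delta$ for all large $\nu$, so $f^\nu \in C^\nu$ eventually; the estimate with $H = F_0$ gives $\limsup_\nu \varphi^\nu(f^\nu) = \limsup_\nu \eta_\rho^{+,\nu}(f^\nu, F_0) \leq \hatdl_\rho(f, F_0) = \varphi(f)$.

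The remaining case, a feasible $f$ on the boundary $\hatdl_\rho(f, G_0) = \delta$, is where Assumption~\ref{ass:qualification_constraint} enters and where I expect the main obstacle. I would invoke Assumption~\ref{ass:qualification_constraint} to obtain $g^k \hto f$ with $\hatdl_\rho(g^k, G_0) < \delta$, so each $g^k$ falls under the strictly feasible case and carries a recovery sequence $\{g^{k,\nu}\}_\nu$ with $g^{k,\nu} \hto g^k$ and $\limsup_\nu \varphi^\nu(g^{k,\nu}) \leq \varphi(g^k) = \hatdl_\rho(g^k, F_0)$. Because $g^k \hto f$ yields $\hatdl_\rho(g^k, F_0) \to \hatdl_\rho(f, F_0) = \varphi(f)$, a diagonal extraction selecting $\nu_k \uparrow \infty$ produces a sequence $f^{\nu} := g^{k(\nu),\nu}$ with $f^\nu \hto f$ and $\limsup_\nu \varphi^\nu(f^\nu) \leq \varphi(f)$. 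The delicate point is to run the diagonalization so that the extracted sequence stays admissible, i.e.\ lies in $C^{\nu} \cap \mathscr{F}^{\nu}$ for large $\nu$ and hypo-converges to $f$ while its $\varphi^\nu$-values do not overshoot $\varphi(f)$; this I would control with the standard metric-space diagonalization argument (cf.\ \cite{rockafellar2009variational}) applied to the double sequence indexed by $(k,\nu)$.

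Combining the liminf inequality of Proposition~\ref{prop1_lipschitz} with the recovery sequences just constructed gives $\varphi^\nu \eto \varphi$ by definition. Finally, set-convergence of $\{C^\nu\}$ to $C$ follows at once: $\LimOut C^\nu \subset C$ was recorded in the proof of Proposition~\ref{prop1_lipschitz} (any hypo-limit of points of $C^\nu$ lies in $C$ since $\hatdl_\rho \leq \eta_\rho^{+,\nu}$), while the reverse inclusion $C \subset \LimInn C^\nu$ is exactly what the recovery construction delivers, because for each $f \in C$ the sequence built above lies in $C^\nu$ for all large $\nu$. As $\LimInn C^\nu \subset \LimOut C^\nu$ always, both limit sets equal $C$.
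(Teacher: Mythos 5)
Your proposal is correct and follows essentially the same route as the paper's proof: the liminf half is delegated to Proposition~\ref{prop1_lipschitz}, the recovery sequence is built by the same case split (infeasible $f$, strictly feasible $f$ via epi-spline density plus the squeeze between $\eta_\rho^{-,\nu}$, $\hatdl_\rho$, and $\eta_\rho^{+,\nu}$ from Theorem~\ref{thm:app-hatdistance} and Assumption~\ref{ass:growing_assumption}, and boundary $f$ via Assumption~\ref{ass:qualification_constraint} with a diagonal extraction), and set-convergence of $\{C^\nu\}$ is read off exactly as in the paper from $\LimOut C^\nu \subset C \subset \LimInn C^\nu$. If anything, your unified treatment of $H \in \{F_0, G_0\}$ and your explicit insistence that the diagonal sequence remain in $C^\nu \cap \mathscr{F}^\nu$ is slightly more careful than the paper's phrasing of the same steps.
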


\begin{proof}
	If $f \not\in C$, then $\varphi(f) = +\infty$ and \eqref{limpsup_lipschitz} holds. Let $f\in C$ and for now suppose there exists $\{f^\nu \in \mathscr{F}^\nu\}_{\nu\in\nats}$ that hypo-converges to $f$. 
	From Assumption \ref{ass:growing_assumption} we have that 
	\begin{equation}
		\label{ec:limsup}
		0 \leq \limsup_{\nu}\eta_\rho^{+,\nu}(f^\nu,F_0)- \eta_\rho^{-,\nu}(f^\nu,F_0) = 0.
	\end{equation}
	Thus, we have that
	\begin{align*}
		\limsup_\nu \eta_\rho^{+,\nu}(f^\nu,F_0) - \limsup_\nu\,& \eta_\rho^{-,\nu}(f^\nu,F_0)\\
		&=\limsup_\nu \eta_\rho^{+,\nu}(f^\nu,F_0)+\liminf_\nu -\eta_\rho^{-,\nu}(f^\nu,F_0)\\
		&\leq \limsup_\nu \eta_\rho^{+,\nu}(f^\nu,F_0) - \eta_\rho^{-,\nu}(f^\nu,F_0)\\
		&\leq 0.
	\end{align*}
	
	From Proposition~\ref{prop:est-rho-hypo-dist} we have that 
	\begin{equation*}
		\eta_\rho^{-,\nu}(f^\nu,F_0) \leq \hatdl_\rho(f^\nu,F_0) \leq  \eta_\rho^{+,\nu}(f^\nu,F_0).
	\end{equation*}
	
	Then
	\begin{equation*}
		\limsup_\nu  \eta_\rho^{-,\nu}(f^\nu,F_0)\leq \limsup_\nu  \eta_\rho^{+,\nu}(f^\nu,F_0).
	\end{equation*}
	
	Combining the inequalities above we have that $$\limsup_\nu \eta_\rho^{+,\nu}(f^\nu,F_0) = \limsup_\nu \eta_\rho^{-,\nu}(f^\nu,F_0)$$ and
	\begin{equation}
		\label{ec_limsup_lipschitz}
		\limsup_\nu \eta_\rho^{+,\nu}(f^\nu,F_0) = \limsup_\nu \hatdl_\rho(f^\nu,F_0) =\lim_\nu \hatdl_\rho(f^\nu,F_0) = \hatdl_\rho(f,F_0),
	\end{equation}
	where the last equality comes from the fact that $\dl(f^\nu,f)\to 0$ and  \cite[Thm.\,4.36]{rockafellar2009variational}. In particular,
	\[ \limsup_{\nu}\eta_\rho^{+,\nu}(f^\nu,F_0)\leq\hatdl_\rho(f,F_0).\]
	In order to have
	\[ \limsup_{\nu}\varphi^\nu(f^\nu)\leq \varphi(f),\]
	it suffices to show that $f^\nu \in C^\nu$. Then, we need to prove that for every $f\in C$ there exists a sequence $f^\nu \in C^\nu$ that $f^\nu \hto f$. The latter is equivalent to have that $C \subset \LimInn C^\nu$ so by proving this we will have \eqref{limpsup_lipschitz} and also the remaining condition for the set convergence of $C^\nu$ to $C$.
	
	For $f \in C$, there are two cases. The first case is that $\hatdl_\rho(f,G_0)<\delta$. By density of $\mathscr{F}^\nu$ in $\mathscr{F}$, there exists $f^\nu \in \mathscr{F}^\nu$ that hypo-converge to $f$, \red{and the fact that $S$ is a compact set and $S\subset\rho\ball$, using the triangle inequality for $\dl_\rho$, which coincides with $\hatdl_\rho$ (see the proof of \cite[Prop.\,7.49]{royset2021optimization}), we have that} 
	\[\hatdl_\rho(f^\nu, G_0) \leq \hatdl_\rho(f, G_0) + \hatdl_\rho(f^\nu, f)\quad \forall \nu.\]

	The hypo-convergence implies that for $\varepsilon = \delta - \hatdl_\rho(f, G_0) >0$ there exists $\bar{\nu}\in\nats$ that 
	\begin{align*}
		\hatdl_\rho(f^\nu, G_0) \leq \delta \quad \forall \nu\geq \bar{\nu}.
	\end{align*}
	
	The second case is that $\hatdl_\rho(f,G_0)=\delta$. By Assumption~\ref{ass:qualification_constraint}, there exists $\{f^n\}_{n\in\nats} \subset \mathscr{F}$ such that $f^n \hto f$ and satisfies \eqref{eq_assumption}. For fixed $n$ there exists $g^\nu_n \in \mathscr{F}^\nu$ that $g^\nu_n \hto f^n$ when $\nu\to\infty$. 
	Repeating the arguments for the first case we have that for some $\bar{\nu}_n$, 
	\[\hatdl_\rho(g^\nu_n,G_0) < \delta \quad \text{ for every }\nu \geq \bar{\nu}_n.\]
	
	We choose $f^\nu$ as $g^\nu_n$ for $\nu \geq \bar{\nu}_n$. In both cases we obtain a sequence that hypo-converges to $f$ when $\nu\to\infty$ and $\hatdl_\rho(f^\nu,G_0)\leq \delta$ for every $\nu$.  But we need $\eta_\rho^+(f^\nu,G_0)\leq \delta$. However in view that $f^\nu \hto f$, because of \eqref{ec_limsup_lipschitz}, we have that 
	\[ \limsup_{\nu} \eta^{+}_\rho(f^\nu,G_0) = \hatdl_\rho(f^\nu,G_0) \leq \delta,\]
	so for sufficiently large $\nu$, $\eta^{+}_\rho(f^\nu,G_0)\leq \delta$ and in consequence $f^\nu \in C^\nu$ for sufficiently large $\nu$.
\end{proof}

We establish the theoretical foundation for our proposed algorithm by proving that near minimizers of the approximated (finite dimensional) problem converge to a solution of the Problem~\ref{problem2}.  This result follows a standard argument, but we include a proof for completeness. 

\begin{theorem}
	\label{th:minimizers}
	Under Assumptions \ref{ass:growing_assumption} and \ref{ass:qualification_constraint}, every cluster point of sequences constructed from near minimizers of $\varphi^\nu$ in $\mathscr{F}$ is contained in $\operatorname{argmin}_{F \in \mathscr{F}}\varphi(F)$ provided that $\varepsilon^\nu$ vanishes, i.e, for $\varepsilon^\nu\to 0$, 
	$$ \operatorname{LimOut}\left(\varepsilon^\nu\operatorname{-argmin}_{F\in \mathscr{F}} \varphi^\nu(F)\right)\subset \operatorname{argmin}_{F \in \mathscr{F}}\varphi(F).$$
\end{theorem}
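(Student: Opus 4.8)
The plan is to invoke the standard epi-convergence machinery from \cite[Ch.\,7]{rockafellar2009variational}, which packages exactly this kind of conclusion. The key prerequisite is that $\varphi^\nu \eto \varphi$ together with a mild equi-coercivity or tightness condition guaranteeing that the near-minimizers do not escape to ``infinity'' in $(\mathscr{F},\dl)$. Since Theorem~\ref{thm:epi} already establishes $\varphi^\nu \eto \varphi$ under the stated assumptions, the bulk of the work is to verify the compactness hypothesis needed to apply the convergence-in-minimization theorem.

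First I would fix a cluster point. Suppose $\bar{F} \in \LimOut(\varepsilon^\nu\text{-}\argmin_{F\in\mathscr{F}}\varphi^\nu(F))$, so there is an index set $N\in\mathcal{N}_\infty^\#$ and a sequence $F^\nu \Nto \bar{F}$ with each $F^\nu$ an $\varepsilon^\nu$-minimizer of $\varphi^\nu$. Because $\mathscr{F}=\uscfcns_+(S)$ is closed in the hypo-topology and balls $\ball_{\dl}(F,r)$ are compact in $(\uscfcns(S),\dl)$ by \cite[Cor.~4.43]{rockafellar2009variational} (cf.\ Remark~\ref{rem:tight}), the sequence $\{F^\nu\}_{\nu\in N}$ indeed has the cluster point $\bar{F}\in\mathscr{F}$; passing to a further subsequence I may assume $F^\nu \hto \bar{F}$ along $N$. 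I would then write out the characterization of convergence in minimization: the epi-limit relation gives, for \emph{this} converging sequence, the liminf inequality $\liminf_\nu \varphi^\nu(F^\nu)\geq \varphi(\bar{F})$ (Proposition~\ref{prop1_lipschitz}), while for an arbitrary competitor $G\in\mathscr{F}$ the recovery-sequence part of $\varphi^\nu\eto\varphi$ produces $G^\nu\hto G$ with $\limsup_\nu \varphi^\nu(G^\nu)\leq \varphi(G)$.

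The argument then closes by chaining these inequalities. By near-optimality, $\varphi^\nu(F^\nu)\leq \inf_{\mathscr{F}}\varphi^\nu + \varepsilon^\nu \leq \varphi^\nu(G^\nu)+\varepsilon^\nu$ for every $\nu$. Taking $\liminf$ on the left and $\limsup$ on the right, and using $\varepsilon^\nu\to 0$, I obtain
\[
\varphi(\bar{F}) \leq \liminf_\nu \varphi^\nu(F^\nu) \leq \limsup_\nu \varphi^\nu(G^\nu) \leq \varphi(G).
\]
Since $G\in\mathscr{F}$ was arbitrary, $\varphi(\bar{F})\leq \inf_{F\in\mathscr{F}}\varphi(F)$, hence $\bar{F}\in\argmin_{F\in\mathscr{F}}\varphi(F)$, which is the claim.

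The main obstacle is the compactness/coercivity step. The liminf--limsup chain is routine once $\varphi^\nu\eto\varphi$ is in hand, but the conclusion is vacuous unless cluster points actually exist and lie in $\mathscr{F}$; moreover I must ensure the infimum of $\varphi$ is finite so that the chain is not degenerate. Finiteness follows because $\varphi(F)=\hatdl_\rho(F,F_0)\leq 1$ on the feasible set $C$ (Proposition~\ref{prop:less1} and Proposition~\ref{prop:est-rho-hypo-dist}), and $C\neq\emptyset$ as $G_0\in C$; the delicate point is that the limiting feasibility $\bar{F}\in C$ is not automatic, but it is delivered precisely by the set-convergence $C^\nu \to C$ proved in Theorem~\ref{thm:epi}, specifically the inclusion $\LimOut C^\nu\subset C$ established in Proposition~\ref{prop1_lipschitz}. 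I would therefore emphasize that it is the conjunction of $\varphi^\nu\eto\varphi$ and the compactness of hypo-balls that makes the off-the-shelf result \cite[Thm.\,7.31]{rockafellar2009variational} directly applicable.
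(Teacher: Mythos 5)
Your proof is correct and follows essentially the same route as the paper's: extract a hypo-convergent (sub)sequence of $\varepsilon^\nu$-minimizers, apply the liminf inequality of Proposition~\ref{prop1_lipschitz} to it, take a recovery sequence for a competitor via the epi-convergence established in Theorem~\ref{thm:epi}, and chain the inequalities using $\varepsilon^\nu \to 0$. The only notable (and favorable) difference is that you compare against an arbitrary $G \in \mathscr{F}$ rather than against some $g \in \argmin_{F\in\mathscr{F}}\varphi(F)$ as the paper does, which avoids implicitly assuming that this argmin is nonempty; your side remarks on compactness of hypo-balls are superfluous for the stated inclusion, since the claim is vacuous when no cluster point exists.
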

\begin{proof}
	Let
	\begin{align}
		f^\star \in \operatorname{LimOut}\left(\varepsilon^\nu\operatorname{-argmin}_{F\in \mathscr{F}} \varphi^\nu(F)\right).
	\end{align}
	Then, there exists $\{\nu_k, k\in\nats\}$ and $f^{k}\in \varepsilon^\nu\operatorname{-argmin}_{F\in \mathscr{F}^\nu} \varphi^\nu(F)$ such that $f^{k}\hto f^\star$. Let $g \in \operatorname{argmin}_{F\in\mathscr{F}}\varphi(F)$.  Under Assumptions~\ref{ass:growing_assumption}~and~\ref{ass:qualification_constraint}, by virtue of Theorem~\ref{thm:epi}, $\{\varphi^\nu\}_{\nu\in\nats}$ epi-converges to $\varphi$, and we have that there exists a sequence $\{g^\nu\}_{\nu\in\nats}$ in $\mathscr{F}$ such that $g^\nu\hto g$ and  
	\begin{align*}
		\limsup_{\nu} \varphi^\nu(g^\nu) \leq \varphi(g).
	\end{align*}
	Since $g \in C$ and $C^\nu \subset C$ we have that there exists $\bar{\nu} \in \nats$ such that $$\eta_\rho^{+,\nu}(g^\nu,G_0)\leq \delta \text{ for every } \nu\geq \bar{\nu}$$ and so $\varphi^{\nu}(g^\nu) < +\infty$ for sufficiently large $\nu$. Moreover,
	\[
	\begin{aligned}
		\varphi\left(f^{\star}\right) \leq \liminf_\nu \varphi^{\nu_{k}}\left(f^{k}\right) &\leq \liminf_\nu \left(\inf _{f \in \mathscr{F}^{\nu_{k}}} \varphi^{\nu_{k}}(f)+\varepsilon^{n_{k}}\right) \\
		&\leq \limsup_\nu \varphi^{\nu_{k}}\left(g^{\nu_{k}}\right)\\
		&\leq \varphi\left(g\right)=\inf_{f \in \mathscr{F}} \varphi(f)
	\end{aligned}
	\]
	which proves that $f^\star \in \operatorname{argmin}_{F \in \mathscr{F}}\varphi(F)$.
\end{proof}

\section{Numerical Examples}\label{sec:Numerical}

We next turn to the numerical procedure that leverages the previous developments. We set our examples in two-dimensional space ($m=2$) to avoid implementation challenges associated with higher dimensions, \red{ although the results in Section~\ref{sec:thms} remain true as stated}.  For given two-dimensional cdfs $F_0,\,G_0\in\cdfcns(S)$, consider the following pseudo-code:  For a given tolerance $\varepsilon>0$
\begin{enumerate}
	\item Construct an infinite refinement $\{\mathcal{R}^\nu\}_{\nu\in\nats}$ of $S$, where each element $\mathcal{R}^\nu$ is a box partition.
	\item Divide each rectangular element of the box partition $\mathcal{R}^\nu$ into two triangles and use epi-splines of degree~1 to approximate the elements on the space of usc functions.
	\item Solve the sequence of approximated optimization given by Problem~\ref{p_eta_rho_esplns_nu} for each $\mathcal{R}^\nu$.  
	
	For each $\nu$-instance, we solve these problems defined by the function $\eta_\rho^{+,\nu}$ by using binary search on an auxiliary variable representing feasibility of the stochastic ambiguity set.  Given our chosen epi-spline space, this procedure corresponds to solving a sequence of linear programming problems.  For more details, consult \cite{Fernanda}.
\end{enumerate}
We implemented our algorithm in Python, using Pyomo \cite{bynumpyomo} for modeling the optimization problems, and we use the commercial solver Cplex \cite{cplex2009v12}, with its academic license, for solving these problems.

In this section, we evaluate the algorithm's performance through two numerical examples, exploring various functions $F_0$, $G_0$, and $\delta$ values in Problem~\ref{problem2}. Here, $\delta$ represents the size of the ambiguity set and reflects the level of trust or proximity to the cdf $G_0$. 
The first example (Section~\ref{ssec:uuv}) involves estimating the position of an unmanned underwater vehicle using two sources of independent information (two distinct datasets). The second example (Section~\ref{ssec:2unif}) tackles the estimation problem with two uniform distributions, enabling us to assess different solutions for various ambiguity parameter values.

Moreover, we present an example including constraints modeling the shape of the cdf.  First, we introduce a \emph{bounded growth} constraint, which corresponds to bounding the local Lipschitz modulus for each element of the partition.  This constraint aims to obtain a smooth-looking cdf.
Next, we include a experiment where the distribution constraint \eqref{eq:distcond} is imposed only over the smallest rectangles of the partition elements.

For all experiments, a mesh of 100 points per axis and a specified tolerance of $\varepsilon = 10^{-8}$ are utilized, ensuring accurate and reliable results.  The datasets generated during the current study are available from the corresponding author on request.

\subsection{The UUV position estimation problem}\label{ssec:uuv}
Consider an unmanned underwater vehicle (uuv) that is returning to a docking station after a long mission. Although the uuv knows the location of the docking station on the map, it has only a vague idea about its own location because it has been underwater for a long time and has only used an inertial navigation system during that time. The docking station sends out pings that can be picked up by the uuv when close enough. The uuv can use these pings to improve the estimate of its own location. The uuv also has an accurate model (in the short term) of where it will be, given that it knows its initial condition.

Let $F_t$ be the cdf of the uuv location over the $(x_1,x_2)$ coordinates based on the inertial navigation system.  Ping data $y_t = (y_{t}^1,y_t^2)$ is informed by the docking station at time $t$. The ping data $y_t$ has noise, so it is not the true location of the uuv at time $t$.  We use a model (Dubin) to construct an estimate of the cdf of the uuv location based on the ping data, called $G_t$.  This serves as another source of information about the uuv location.

The problem is to find a cdf  that models the position of the uuv at time $t$.  Given the two sources of information, our estimation problem is posed to minimize the distance to $F_t$, while keeping the distance to $G_t$ bounded.
For each $k=1, \ldots, t$, we have $N$ data points about the location of the uuv. The true trajectory is considered as $\bar{x} = (\bar{x}^1,\ldots,\bar{x}^t)$, and with the ping data we compute $z^k$, $k=1, \ldots, t$ to construct $G_{t}$ from it.
\noindent The expected values  of position using the cdfs $F_t$ and of $G_t$ are $(8.8261,1.0724)$ and $(9.9965, 1.7468)$ respectively.  Table \ref{tab:uuv_results} illustrates the results obtained for different values of $\delta$.

\begin{table}[ht!]
	\centering
	\begin{tabular}{ccccc}
		\noalign{\smallskip}\hline\noalign{\smallskip}
		$\delta$&  $s$ &  $\eta$    &  Expected position &  Execution time [sec] \\
		\noalign{\smallskip}\hline\noalign{\smallskip}
		%		\hline
		$0.90$  & 0.0000 & 0.0787 &  (9.034, 1.8908)  &  969  \\
		$0.10$ &    0.0000  &    0.3501      &   (9.734, 1.2908)  &   1008    \\
		$0.01$  & 0.0824     & 1.0    & (10.3023, 1.8908) &  1008 \\
		\noalign{\smallskip}\hline
		%		\hline
	\end{tabular}
	\caption{Solution estimates for the uuv position estimation problem for several values of the ambiguity set radius ($\delta$).}
	\label{tab:uuv_results}
\end{table}

We depict the data sets and various expected locations for cdfs obtained for different values of $\delta$ in Figure~\ref{fig:plot_uuv}.  In this picture, blue markers corresponds to data points coming from the inertial navigation system and determine $F_t$. Grey markers are points associated to the modeling data set used to construct $G_t$.  Meanwhile, the black markers, labeled as $X_t$ and $Z_t$, represent the expected locations corresponding to cdfs $F_t$ and $G_t$ respectively.  The resulting expected positions, estimated for $\delta=0.9,0.1,0.01$, are visualized through the use of red markers.  This picture shows the influence of the ambiguity set radious on the estimation of the uuv position: as $\delta$ diminishes, the estimated position tends to converge towards the position determined by $G_t$.

\begin{figure}[h!]
	\centering
	\includegraphics[width = 0.75\textwidth]{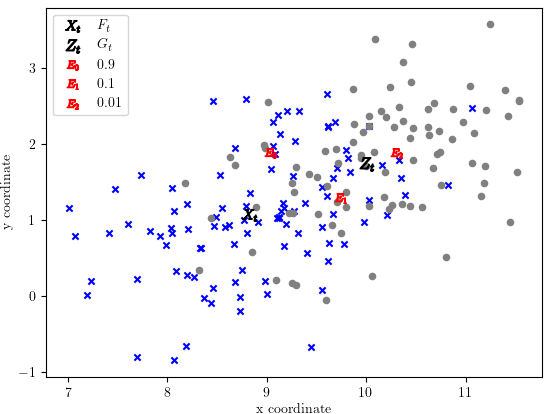}
	\caption{Expected positions (depicted as red markers $E_0,E_1,E_2$) resulting from various cdfs estimated by the algorithm for different uncertainty set radii, $\delta = 0.9, 0.1, 0.01$ respectively. The position denoted as $X_t$ and $Z_t$ correspond to the expected locations linked to $F_t$ and $G_t$ respectively.}
	\label{fig:plot_uuv}
\end{figure}

The parameter $\eta$ reported  in Table~\ref{tab:uuv_results} provides an estimate for $\hatdl_\rho(F_t,F^*)$, where $F^*$ is a solution yielded by the algorithm. We use the parameter $s$ as a tolerance to assess feasibility with respect to the stochastic ambiguity set, which is constructed as a ball centered at the cdf $G_t$.

The results in Table~\ref{tab:uuv_results} show that the estimated expected position approaches to the expected position under the cdf $G_t$ as the ambiguity set radius ($\delta$) decreases.  Additionally, the distance to $F_t$ increases as the ambiguity set shrinks, reaching the upper bound of $1$ at $\delta=0.01$.  This is because the algorithm is forced to choose a cdf that is closer to $G_t$ as $\delta$ decreases.

%%%%%%%%%%%%%%%%%%%%%%%%%%%%%%%%%%%%%%%%%%%%%
\subsection{Two uniform distributions}\label{ssec:2unif}
%Setting 2
Let $F_0$ be a uniform cdf over $[0,1]\times[0,1]$ and let $G_0$ be a uniform cdf over $[2,3]\times[2,3]$.  Note that the supports of these cdfs are disjoint, and thus $\hatdl_\rho(F_0,G_0)=1$ for $\rho\geq 1$.  We solve the estimation problem given by Problem~\ref{p_eta_rho_esplns_nu}, of finding a cdf that minimizes the distance to $F_0$, while remaining in the ambiguity set centered at $G_0$ and with radius $\delta$, for several values of $\delta$.  We also impose the distribution constraint \eqref{eq:distcond} over every rectangle of the mesh.  Our estimation scheme relies on a binary search over the optimal objective function, reported by the variable $\eta$, and the feasibility auxiliary variable represented by $s$.  Note that, for small values of $\delta$, the ambiguity constraint leads to a solution that is feasible and closer to $G_0$, but which its distance to $F_0$ reaches the upper bound of 1.

Table~\ref{tab:setting2} outline the outcomes for a collection of values for the parameter $\delta$.  The corresponding outcomes for $\delta\in\{ 1.0,0.7,0.1\}$ underscore that no ambiguity constraint violations occur within these values, and thus the estimated cdfs lie within a distance of at least $\delta$ to $G_0$. For the value of $\delta=10^{-4}$, the feasibility parameter $s$ has a positive value, meaning that the estimated cdf can be assured to lie just at a distance of $\delta + s$.  Nonetheless, this value has been shown to decreased as the mesh gets finer (for meshes with more than 100 points in each axis). Moreover, for the same $\delta$ as before, the approximated distance, $\eta$, coincides with the theoretical value indicated in Figure~\ref{fig:s2}. Table~\ref{tab:setting2} also reports the quantity \emph{Error~\%}, that quantifies the relative percentage error of all possible rectangles that can be obtained from vertices in the mesh ($\mathcal{O}(N^4)$, $N$ being the number of points in each axis) that do not satisfy the distribution constraint.  In this numerical setting, it shows that errors are under 1\%. Note that the distribution constraint is only imposed on the rectangles of the mesh ($\mathcal{O}(N^2)$).

\begin{table}
	\centering
	\begin{tabular}{cccccccc}
		\hline\noalign{\smallskip}
		$\delta$ &  $s$ &   $\eta$ & Estimated expected value&  Error (\%) & Execution time [sec]   \\
		\noalign{\smallskip}\hline\noalign{\smallskip}
		1.00 & 0.0 & 0.019 & (0.4597, 0.5021) & 0.9811 & 3327     \\
		0.70 & 0.0 & 0.30 & (1.1400, 1.1382) &  0.3866 & 558       \\
		0.10 & 0.0 & 0.89 & (2.6385, 2.6441) &  0.0764 & 484        \\
		$10^{-4}$ &  0.058 & 1.0 & (2.6385, 2.6441)  &  0.0100 & 454   \\ 
		\noalign{\smallskip}\hline
	\end{tabular}
	\caption{Results for the cdf estimate with two disjoint uniform distribution, for values of $\delta$ in $\{1.00, 0.70, 0.10, 10^{-4}\}$.}\label{tab:setting2}
\end{table}

\begin{figure*}
	{\centering
		\includegraphics[width=0.4\textwidth]{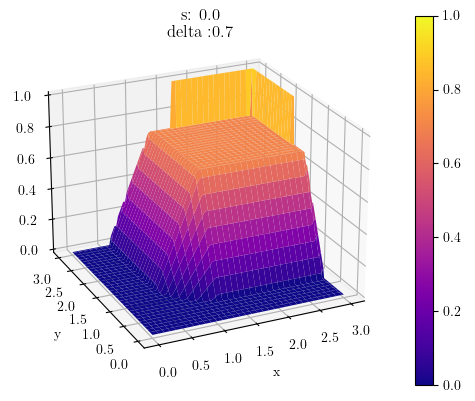}}
	\centering
	{\centering  
		\includegraphics[width=0.4\textwidth]{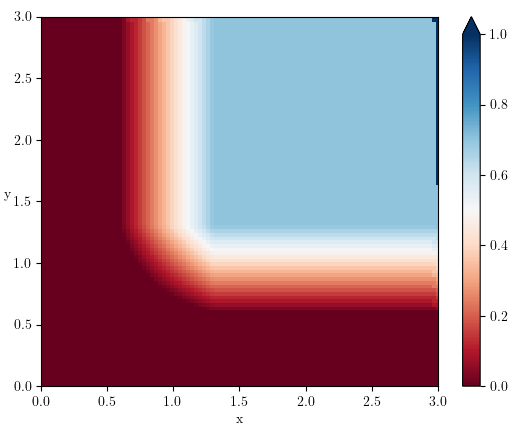}}
	{\centering
		\includegraphics[width=0.4\textwidth]{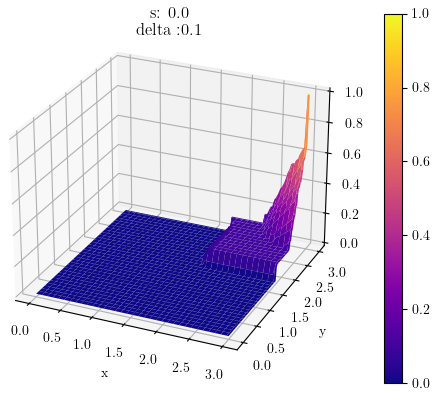}}
	\centering
	{\centering  
		\includegraphics[width=0.4\textwidth]{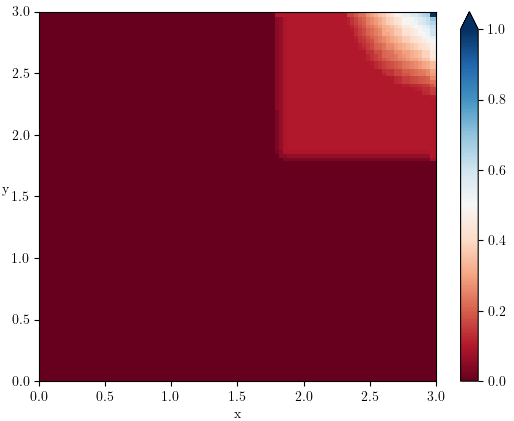}}
	\caption{The graph of the estimated cdfs for $\delta=0.7$ and $\delta=0.1$ are presented in the left column, along with their respective heatmaps showcased in the right column. Further insights and detailed metrics are available in Table~\ref{tab:setting2}.}\label{fig:s2}   
\end{figure*}

Our last case corresponds to two disjoint uniform cdfs, akin to the previous setting.  However, in this example we incorporate a bounded growth constraint to the solution.  The motivation behind this constraint lies in its ability to enhance the smoothness of the resulting cdf.  This constraint is stated over every triangular element of the partition, and bounds the growth of the distribution by a given parameter $L$.  This corresponds to a locally Lipschitz-type constraint over each element of the partition, keeping the Lipschitz modulus below the parameter $L$.  Note that this constraint is a local version of the global property. 

We execute this example for various combinations of  $\delta$ and $L$, specifically for $(\delta=0.70, L=1.0)$ and $(\delta=0.10, L=0.85)$ respectively.  The outcomes of these instances are laid out in Table~\ref{tab:setting2-bg}, while Figure~\ref{fig:s2_L1} depicts the resultant cdfs.  For the first instance, it easy to verify that the estimated cdf adopts a notably smoother character when compared to its unconstrained counterpart, as evident in the initial row of Figure~\ref{fig:s2}.  This observation resonates with the intention behind introducing the bounded growth constraint, as it successfully yields a smoother cdf outcome.

\begin{table}
	\centering
	\begin{tabular}{ccccccc}
		\hline\noalign{\smallskip}
		$\delta$ & $L$ &  $s$ &   $\eta$ &  Estimated expected value& Execution time [sec] \\
		\noalign{\smallskip}\hline\noalign{\smallskip}
		0.70 & 1.0 & 0.0 & 0.2999 & (0.80, 0.76) & 708 \\
		0.10 & 0.85 & 0.0024 & 1.0 & (2.40, 2.40) & 425\\
		\noalign{\smallskip}\hline
	\end{tabular}
	\caption{Results for $(\delta=0.70,L=1.0)$ and $(\delta=0.10,L=0.85)$, when adding the growth constraint to the two disjoint uniform cdfs example. The table shows the values of the auxiliary feasibility variable $s$, auxiliary objective function estimation $\eta$, the estimated expected value, and the execution time for each case.}\label{tab:setting2-bg}
\end{table}

\begin{figure*}
	{\centering
		\includegraphics[width=0.40\textwidth]{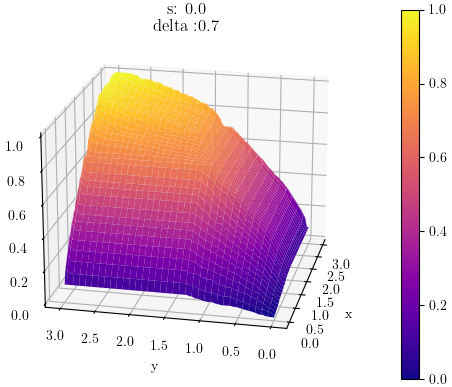}}
	\hfill
	\centering
	{\centering  
		\includegraphics[width=0.40\textwidth]{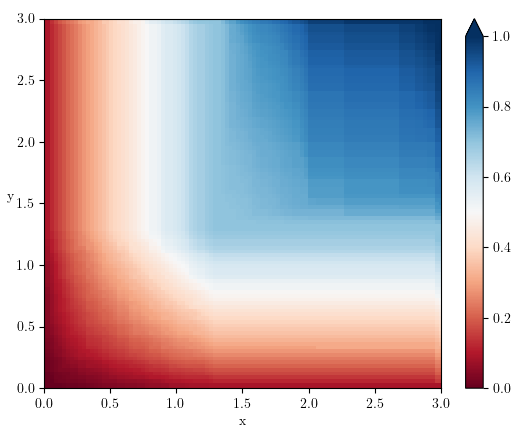}}
	\caption{Graph and heat map of the estimated cdf, for values of $(\delta = 0.7, L = 1)$.  The cdf incorporates the growth constraint, and is a smoother version of the unconstrained counterpart in the first row of Figure~\ref{fig:s2}.}\label{fig:s2_L1}
\end{figure*}

\medskip

\state Funding.
Julio Deride and Fernanda Urrea were partially supported by ANID under grant FONDECYT 11190549.  Johannes Royset was partially funded by Office of Naval Research under grants N0001421WX00142 and N00014-24-1-2318.

\medskip

\state Data availability.
All data in this manuscript was randomly generated as described in the document. The data is available from the corresponding author upon request.

\medskip

\state Competing interests.  
Johannes Royset is an editor in this journal.

\bibliography{references.fixed}% common bib file
\bibliographystyle{plain} 

\end{document}